\tikzset{main node/.style={circle,fill=blue!20,draw,minimum size=1cm,inner sep=0pt},}
\def\thmhead@plain#1#2#3{%
  \thmname{#1}\thmnumber{\@ifnotempty{#1}{ }\@upn{#2}}%
  \thmnote{ {\the\thm@notefont#3}}}
\let\thmhead\thmhead@plain
\newtheorem{theorem}{Theorem}[section]
\newtheorem{lemma}[theorem]{Lemma}
\newtheorem{proposition}[theorem]{Proposition}
\newtheorem{corollary}{Corollary}[theorem]
\newtheorem{conjecture}{Conjecture}[section]
\DeclareMathOperator{\spn}{span}
\title{New Classes of Set-Sequential Trees}
\author{Louis Golowich\thanks{MIT PRIMES, Department of Mathematics, MIT, 77 Massachusetts Ave., Cambridge, MA 02139. Email:~{\tt louis.golowich@gmail.com}.} \and 
Chiheon Kim\thanks{Department of Mathematics, MIT, 77 Massachusetts Ave., Cambridge, MA 02139. Email: {\tt chiheonk@math.mit.edu}.}}
\date{\today}							
\begin{document}
\maketitle
\begin{abstract}
A graph is called {\it set-sequential} if its vertices can be labeled with distinct nonzero vectors in $\mathbb{F}_2^n$ such that when each edge is labeled with the sum$\pmod{2}$ of its vertices, every nonzero vector in $\mathbb{F}_2^n$ is the label for either a single vertex or a single edge. We resolve certain cases of a conjecture of Balister, Gy\H{o}ri, and Schelp in order to show many new classes of trees to be set-sequential. We show that all caterpillars $T$ of diameter $k$ such that $k \leq 18$ or $|V(T)| \geq 2^{k-1}$ are set-sequential, where $T$ has only odd-degree vertices and $|T| = 2^{n-1}$ for some positive integer $n$. We also present a new method of recursively constructing set-sequential trees.
\end{abstract}

\section{Introduction}
%
A labeling of a graph $G$ with a set $S$ of labels is any function from the vertices and edges of $G$ to $S$. A graph is called {\it set-sequential} if there exists a labeling of its vertices with distinct nonzero vectors in $\mathbb{F}_2^n$ such that when each edge is labeled with the sum$\pmod{2}$ of its vertices, every nonzero vector in $\mathbb{F}_2^n$ is used exactly once as a label for either a vertex or for an edge; this definition is generally attributed to \cite{acharya_set-valuations_1983}.
As a direct consequence of this definition, a graph $G$ can be set-sequential only if $|V(G)| + |E(G)| = 2^n - 1$ for some $n$.

Much work on the problem of classifying set-sequential graphs has been focused on trees. Of particular interest are caterpillar trees, defined to be any tree containing some path, called the center path, from which every vertex has distance at most 1. It is often useful to classify caterpillars by the diameter, which is the largest distance between any two vertices in a connected graph. In the case of caterpillars, the diameter gives the length of the center path.

Abhishek and Agustine \cite{abhishek_set-valued_2012} showed that all caterpillars of diameter at most 4 with only odd-degree vertices, and with $2^{n-1}$ vertices for some $n$, are set-sequential. Abhishek \cite{abhishek_set-valued_2013} extended this result to caterpillars of diameter 5. Mehta and Vijayakumar \cite{mehta_note_2008} showed that all paths of length $2^{n-1}$ are set-sequential if $n \geq 5$. Hegde \cite{hegde_set_2009} showed that no graph with exactly 1 or 2 vertices of even degree is set-sequential. However, no similar restriction is known for graphs with only odd-degree vertices, a fact motivating in the following conjecture. (This conjecture appears to have been proposed before, but we were unable to find a citation, so we restate it here.)
\begin{conjecture}
\label{conj:odd_deg_trees}
All trees with only odd-degree vertices and with $2^n$ vertices for some integer $n$ are set-sequential.
\end{conjecture}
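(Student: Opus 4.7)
Since the statement is given as a conjecture and is likely open, a complete proof is out of reach; I can only sketch a plausible strategy. The natural approach is strong induction on $n$. The base cases $n=1,2$ involve only a handful of trees and are straightforward to check by hand. Before attacking the inductive step, it is reassuring to verify consistency: a tree on $2^n$ vertices has $|V|+|E|=2^{n+1}-1=|\mathbb{F}_2^{n+1}\setminus\{0\}|$, and when every degree is odd, $\sum_e \phi(e) = \sum_v \deg(v)\phi(v) = \sum_v \phi(v)$ in $\mathbb{F}_2^{n+1}$, so the total of all labels vanishes, matching $\sum_{v \in \mathbb{F}_2^{n+1}\setminus\{0\}} v = 0$ for $n+1 \geq 2$. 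This tells us at least that no immediate parity obstruction appears.

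For the inductive step, I would attempt a doubling reduction. Given $T$ with $|V(T)|=2^n$ and all odd degrees, I would seek a subtree $T_0\subset T$ on $2^{n-1}$ vertices with all degrees odd (as a stand-alone tree), such that $T$ is recovered from $T_0$ by attaching $2^{n-1}$ new pendants, each vertex $v\in V(T_0)$ receiving an \emph{even} number $k_v$ of them so that degrees remain odd. By the inductive hypothesis, label $T_0$ set-sequentially by some $\phi\colon V(T_0)\to\mathbb{F}_2^n\setminus\{0\}$, and extend to $T$ in $\mathbb{F}_2^{n+1}$ by setting $\phi'(v)=(0,\phi(v))$ for $v\in V(T_0)$ and assigning prefix-$1$ vectors to the new pendants. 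The requirement is that the new pendant labels together with the new edge labels cover $\{1\}\times\mathbb{F}_2^n$ exactly once; since each new edge joins a prefix-$0$ vertex to a prefix-$1$ vertex, a straightforward count shows the number of each type of new label is correct.

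Two obstacles stand out. First, the existence of such a decomposition is a nontrivial structural question: not every odd-degree tree obviously admits a subtree $T_0$ of exactly half the size whose complement is a disjoint set of pendants with even multiplicity at each base vertex. I would try to construct $T_0$ by rooting $T$ at a well-chosen center and greedily peeling off pairs of pendants sharing a common parent, preserving odd degrees at each step, and proving that this procedure can be made to halve the vertex count exactly. Second, even granting such a decomposition, extending $\phi$ becomes a matching problem on $\mathbb{F}_2^n$: for each $v\in V(T_0)$ one must select $k_v$ distinct ``pendant labels'' $w$ so that the pairs $\bigl((1,w),(1,w+\phi(v))\bigr)$ partition $\{1\}\times\mathbb{F}_2^n$. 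This is the step I expect to be hardest; a Hall-type criterion is the first thing to try, but very likely one must strengthen the inductive hypothesis by demanding additional structural properties of $\phi$ (for instance, that certain affine subsets of $\mathbb{F}_2^n$ be hit by vertex labels in prescribed proportions) so that the extension is feasible. I would test this program on the caterpillar subfamilies already addressed elsewhere in the paper before attempting the full conjecture.
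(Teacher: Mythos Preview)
The statement is an open conjecture; the paper does not prove it, but only establishes special cases (caterpillars of small diameter, caterpillars with many vertices relative to diameter). So there is no ``paper's own proof'' to compare against.

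Your sketch is not wrong as a strategy---in fact it is precisely the approach of Balister, Gy\H{o}ri, and Schelp that the paper is built around. Your ``matching problem on $\mathbb{F}_2^n$'' in the second obstacle is exactly Conjecture~\ref{conj:main_conj}: given the multiset of base-vertex labels $v_i=\phi(v)$ (with multiplicities $k_v$), partition $\mathbb{F}_2^n$ into pairs $(p_i,q_i)$ with $p_i+q_i=v_i$. The paper's contribution is to resolve this conjecture under various restrictions (Theorem~\ref{thm:all_conj_progress}): when $\dim\spn\{v_i\}\le 5$, when there are at most $n$ distinct $v_i$, etc. Your instinct that a Hall-type argument alone will not suffice is correct; the paper's partial results all proceed by carefully splitting the $v_i$'s into blocks compatible with affine-subspace decompositions of $\mathbb{F}_2^n$, not by a blanket matching criterion.

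The first obstacle you name is more serious than your sketch suggests, and the paper flags it explicitly in its concluding section. Your proposed decomposition requires removing $2^{n-1}$ pendant edges from $T$, so $T$ must have at least $2^{n-1}$ leaves. Trees with fewer than half their vertices of degree~1 (e.g.\ a long path with a few short offshoots) admit no such $T_0$ at all, so no amount of ``greedy peeling'' will work there. The paper addresses this limitation separately via Theorem~\ref{thm:4_copies}, which glues four copies of a smaller set-sequential tree rather than halving---a genuinely different recursive mechanism that your proposal does not anticipate. Any complete proof of the conjecture would need some such alternative for the low-leaf regime.
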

Our results in this paper mark progress towards Conjecture~\ref{conj:odd_deg_trees}, and thus we mostly focus on trees with only odd-degree vertices.

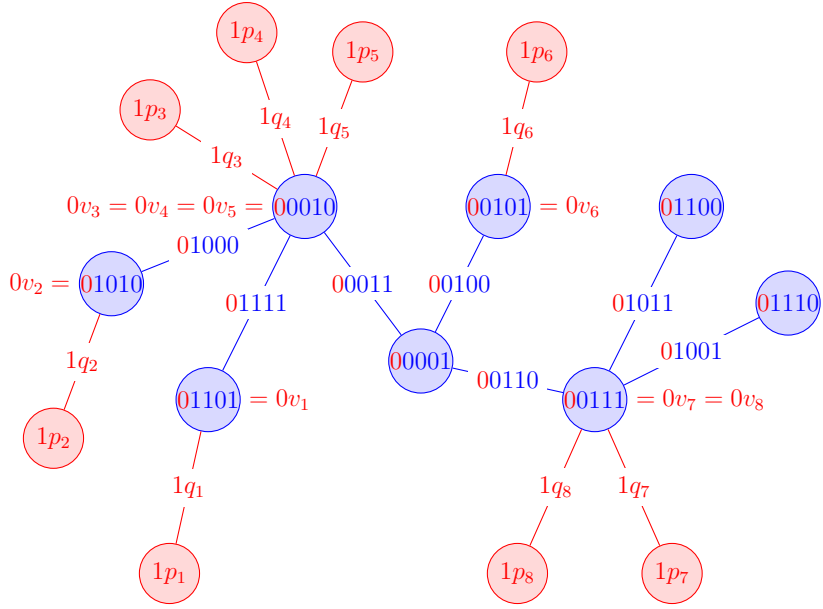
\begin{figure}
\centering
\begin{tikzpicture}[scale=2.57, every node/.style={scale=.8}]
\node[main node, fill=blue!15, draw=blue, text=blue] (1) at (.1, .2) {{\color{red}0}{\color{blue}0001}};
\node[main node, fill=blue!15, draw=blue, text=blue, label={[red]right:$=0v_7=0v_8$}] (3) at (1, 0) {{\color{red}0}{\color{blue}0111}};
\node[main node, fill=blue!15, draw=blue, text=blue, label={[red]right:$=0v_1$}] (4) at (-1, 0) {{\color{red}0}{\color{blue}1101}};
\node[main node, fill=blue!15, draw=blue, text=blue, label={[red]left:$0v_3=0v_4=0v_5=$}] (5) at (-.5, 1) {{\color{red}0}{\color{blue}0010}};
\node[main node, fill=blue!15, draw=blue, text=blue, label={[red]right:$=0v_6$}] (6) at (.5, 1) {{\color{red}0}{\color{blue}0101}};
\node[main node, fill=blue!15, draw=blue, text=blue] (7) at (1.5, 1) {{\color{red}0}{\color{blue}1100}};
\node[main node, fill=blue!15, draw=blue, text=blue] (8) at (2, .5) {{\color{red}0}{\color{blue}1110}};
\node[main node, fill=blue!15, draw=blue, text=blue, label={[red]left:$0v_2=$}] (9) at (-1.5, .6) {{\color{red}0}{\color{blue}1010}};

\draw[blue] (1) -- (3) node[midway, fill=white] {{\color{red}0}{\color{blue}0110}};
\draw[blue] (1) -- (5) node[midway, fill=white] {{\color{red}0}{\color{blue}0011}};
\draw[blue] (5) -- (4) node[midway, fill=white] {{\color{red}0}{\color{blue}1111}};
\draw[blue] (5) -- (9) node[midway, fill=white] {{\color{red}0}{\color{blue}1000}};
\draw[blue] (1) -- (6) node[midway, fill=white] {{\color{red}0}{\color{blue}0100}};
\draw[blue] (3) -- (7) node[midway, fill=white] {{\color{red}0}{\color{blue}1011}};
\draw[blue] (3) -- (8) node[midway, fill=white] {{\color{red}0}{\color{blue}1001}};

\node[main node, fill=red!15, draw=red, text=red] (10) at (-1.2, -.9) {$1p_1$};
\node[main node, fill=red!15, draw=red, text=red] (11) at (-1.8, -.2) {$1p_2$};
\node[main node, fill=red!15, draw=red, text=red] (12) at (-1.3, 1.5) {1$p_3$};
\node[main node, fill=red!15, draw=red, text=red] (13) at (-.8, 1.9) {$1p_4$};
\node[main node, fill=red!15, draw=red, text=red] (14) at (-.2, 1.8) {$1p_5$};
\node[main node, fill=red!15, draw=red, text=red] (15) at (.7, 1.8) {$1p_6$};
\node[main node, fill=red!15, draw=red, text=red] (16) at (.6, -.9) {$1p_8$};
\node[main node, fill=red!15, draw=red, text=red] (17) at (1.4, -.9) {$1p_7$};

\draw[red] (4) -- (10) node[midway, fill=white] {$1q_1$};
\draw[red] (9) -- (11) node[midway, fill=white] {$1q_2$};
\draw[red] (5) -- (12) node[midway, fill=white] {$1q_3$};
\draw[red] (5) -- (13) node[midway, fill=white] {$1q_4$};
\draw[red] (5) -- (14) node[midway, fill=white] {$1q_5$};
\draw[red] (6) -- (15) node[midway, fill=white] {$1q_6$};
\draw[red] (3) -- (16) node[midway, fill=white] {$1q_8$};
\draw[red] (3) -- (17) node[midway, fill=white] {$1q_7$};
\end{tikzpicture}
\caption{An illustration of the method for generating set-sequential trees introduced in \cite{balister_coloring_2011} that was presented as motivation for Conjecture~\ref{conj:main_conj}. The set-sequential labeling of the tree $T$ with $2^3$ vertices, shown in blue, is used to generate a set-sequential labeling of the tree $T'$ with $2^4$ vertices by adding the $2^3$ pendant edges shown in red. By definition, the $p_i$'s and $q_i$'s together must cover all vectors in $\mathbb{F}_2^4$, and must satisfy $p_i + q_i = v_i$ for all $i$.}
\label{fig:add_pends}
\end{figure}

Balister, Gy\H{o}ri, and Schelp \cite{balister_coloring_2011} proposed a general method to generate set-sequential trees by adding $2^{n-1}$ pendant edges to a set-sequential tree with $2^{n-1}$ vertices. Specifically, consider a set-sequential tree $T$ with $2^{n-1}$ vertices. Then define $T'$ to be a tree with $2^n$ vertices that is obtained by adding $2^{n-1}$ pendant edges to $T$, and let the $i$th new pendant edge for $1 \leq i \leq 2^{n-1}$ be attached to the vertex in $T$ labeled $v_i$. The following structure for a set-sequential labeling of $T'$ was proposed in \cite{balister_coloring_2011}, and is illustrated in Figure~\ref{fig:add_pends}. Append $0$ to the beginning of the label of every vertex and edge that was already in $T$ (every vertex and edge that is not from a new pendant edge), and let the $i$th new pendant edge consist of a vertex labeled $1p_i$ and an edge $1q_i$, where $ab$ denotes the concatenation of $a$ and $b$. Then for $T'$ to be set-sequential, the following two conditions must hold by definition:
\begin{itemize}
\item The set of all $p_i$'s and $q_i$'s for $1 \leq i \leq 2^{n-1}$ consists of all vectors in $\mathbb{F}_2^n$.
\item For all $1 \leq i \leq 2^{n-1}$, it holds that $1p_i + 0v_i = 1q_i$, or equivalently, $p_i + q_i = v_i$.
\end{itemize}
It follows by these constraints that if $n \geq 2$, then 
\begin{equation}
\label{eq:vi_sum_0}
\sum_{i=1}^{2^{n-1}} v_i = \sum_{i=1}^{2^{n-1}} p_i + \sum_{i=1}^{2^{n-1}} q_i = \sum_{v \in \mathbb{F}_2^n} v = 0.
\end{equation}
It was conjectured in \cite{balister_coloring_2011} that for any choice of the $v_i$'s satisfying (\ref{eq:vi_sum_0}), there always exists a choice of the $p_i$'s and $q_i$'s for which the two conditions above hold, as formalized below.

\begin{conjecture}[\cite{balister_coloring_2011}]
\label{conj:main_conj}
For any $2^{n-1}$ non-zero vectors $v_1, \dots, v_{2^{n-1}} \in \mathbb{F}_2^n$ with $n \geq 2$ and $\sum_{i=1}^{2^{n-1}} v_i = 0$, there exists a partition of $\mathbb{F}_2^n$ into pairs of vectors $(p_i, q_i)$ for $1 \leq i \leq 2^{n-1}$ such that $v_i = p_i + q_i$ for all $i$.
\end{conjecture}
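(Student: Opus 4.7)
My plan is to argue by induction on $n$. The base case $n = 2$ is immediate: the hypothesis $v_1 + v_2 = 0$ forces $v_1 = v_2 =: v$, and $\mathbb{F}_2^2$ splits as $\{0, v\} \sqcup \{w, w + v\}$ for any $w \notin \langle v \rangle$, both pairs summing to $v$.

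For the inductive step, fix a codimension-$1$ hyperplane $H \leq \mathbb{F}_2^n$ and let $H^c$ denote its nontrivial coset. Split the index set $[2^{n-1}] = A \sqcup B$ according to whether $v_i \in H$ or $v_i \notin H$; projecting $\sum_i v_i = 0$ to $\mathbb{F}_2^n / H \cong \mathbb{F}_2$ shows that $|B|$ (and hence $|A|$) is even. In any valid partition, each pair indexed by $A$ lies entirely in $H$ or entirely in $H^c$ (since its sum is in $H$), while each pair indexed by $B$ crosses between $H$ and $H^c$; a simple count forces exactly $|A|/2$ of the $A$-pairs to lie in each side. The strategy is then: (a) split $A = A_0 \sqcup A_1$ with $|A_0| = |A_1| = |A|/2$; (b) pick distinct $p_i \in H$ for each $i \in B$, which automatically gives $q_i = p_i + v_i \in H^c$; and (c) recursively partition $H \setminus \{p_i\}_{i \in B}$ into pairs with sums $\{v_i\}_{i \in A_0}$, and likewise $H^c \setminus \{q_i\}_{i \in B}$ with $\{v_i\}_{i \in A_1}$.

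The central obstacle is that step (c) is \emph{not} an instance of the inductive hypothesis: the subsets $H \setminus \{p_i\}$ and $H^c \setminus \{q_i\}$ are not affine subspaces of $H \cong \mathbb{F}_2^{n-1}$, and the restricted sums $\sum_{i \in A_0} v_i$ and $\sum_{i \in A_1} v_i$ need not vanish individually (one only has $\sum_{A_0} v_i + \sum_{A_1} v_i = \sum_B v_i$). To push the induction through I would strengthen the hypothesis to cover partitions of ``structured'' subsets of $\mathbb{F}_2^{n-1}$---such as complements of a union of cosets of a smaller subspace---satisfying only the obvious necessary sum condition $\sum u_i = \sum_{x \in S} x$. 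The key is then to choose $H$ and the witness set $\{p_i\}_{i \in B}$ so that $\{p_i\}$ is a union of cosets of some subspace $W \leq H$ and the relevant sum conditions line up, enabling a coset-by-coset recursion inside $W$. Showing that such compatible choices always exist is, I expect, the hard part, and will likely require a case analysis driven by the distribution of the $v_i$'s (handling separately the cases where all $v_i$ lie in a common proper subspace, where a single value has very high multiplicity, and where the multiset is ``generic''). It is plausible that the full conjecture requires ideas beyond this inductive scheme, consistent with the fact that the paper settles only partial cases.
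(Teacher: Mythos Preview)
The statement you are attempting is Conjecture~\ref{conj:main_conj}, which the paper does \emph{not} prove; it remains open. There is therefore no ``paper's own proof'' to compare against, and your own write-up is explicitly a strategy sketch rather than a proof---you identify the obstruction yourself and concede that the full conjecture may lie beyond this scheme. That self-assessment is accurate: the step you label (c) is a genuine gap, and the proposed strengthening to ``complements of unions of cosets'' with a relaxed sum condition is not carried out and is not known to go through.

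It is worth noting how your hyperplane idea relates to what the paper \emph{does} prove. Lemma~\ref{lem:partition} and Proposition~\ref{thm:small_dimension} implement essentially your splitting $\mathbb{F}_2^n = H \sqcup H^c$ (more generally, into cosets of a subspace $S$), but only under the additional hypothesis that $\spn\{v_1,\dots,v_{2^{n-1}}\}\subseteq S$. In your notation this forces $B=\emptyset$, so every pair lies entirely within a single coset, the remaining sets are honest affine subspaces, and the recursion is clean. The hard work in the paper (Lemma~\ref{lem:partition}) is then to split the multiset $\{v_i\}$ into two halves each summing to zero, which is exactly the sum-compatibility issue you flag, resolved there by an ad hoc combinatorial argument that exploits $\dim\spn\{v_i\}<n$. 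The other partial results (Propositions~\ref{lem:1_more_even}--\ref{lem:n_vals_with_odd}) use a different inductive device: they single out one value $u$ occurring with high multiplicity, pass to the quotient $\mathbb{F}_2^n/\langle u\rangle$, and lift a solution back. None of these techniques handles the generic case where $B\neq\emptyset$ and the $v_i$'s span all of $\mathbb{F}_2^n$, which is precisely the regime where your step (c) breaks down.
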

Conjecture \ref{conj:main_conj} was shown in \cite{balister_coloring_2011} to hold if $n \leq 5$ or if $v_1 = \dots = v_{2^{n-2}}$ and $v_{2i-1} = v_{2i}$ for all $1 \leq i \leq 2^{n-2}$.

In this paper, we resolve additional cases of Conjecture \ref{conj:main_conj} in order to show the set-sequentialness of new classes of trees. We resolve the conjecture in cases where either the dimension of the span of the vectors $v_1, \dots, v_{2^{n-1}}$ or the number of distinct vectors $v_i$ is restricted. We then show that all caterpillars $T$ with only odd-degree vertices of diameter $k$ such that $k \leq 18$ or $|V(T)| \geq 2^{k-1}$ are set-sequential, assuming that $T$ has $2^{n-1}$ vertices for some positive integer $n$. We also present a new method of constructing a set-sequential tree by connecting four copies of an existing set-sequential tree, which enables us to prove the set-sequentialness of a broader class of trees that is not covered by the method presented in~\cite{balister_coloring_2011}.

The organization of this paper is as follows. In Section \ref{sec:conj_progress}, we partially prove Conjecture~\ref{conj:main_conj} by placing additional restrictions on the dimension of the span of $v_1, \dots, v_{2^{n-1}}$, or on the number of distinct vectors $v_i$. In Section \ref{subsec:app_conj}, we apply the results from Section \ref{sec:conj_progress} to showing new classes of trees, and specifically caterpillars, to be set-sequential. In Section \ref{subsec:copies_induction}, we present an alternative method of constructing set-sequential trees. We provide concluding remarks in Section \ref{sec:conclusion}.

\section{Progress towards Conjecture \ref{conj:main_conj}}
\label{sec:conj_progress}

We begin with the case where $\dim(\spn\{v_1, \dots, v_{2^{n-1}}\}) \leq 5$, utilizing the fact that the conjecture is already known to hold when $n$ is at most $5$. We first need the following lemma.

\begin{lemma}
\label{lem:partition}
Let $n \geq 3$ be an integer. For any $2^{n-1}$ nonzero vectors $v_1, \dots, v_{2^{n-1}} \in \mathbb{F}_2^n$ with sum 0 such that their span has dimension less than $n$, there exists a partition of $\{v_1, \dots, v_{2^{n-1}}\}$ into two subsets of equal size such that the sum of the elements in each subset is 0.
\end{lemma}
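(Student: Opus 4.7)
The plan is to decompose the multiset $\{v_1, \dots, v_{2^{n-1}}\}$ into duplicate pairs together with a ``leftover'' set of distinct vectors, and then assemble one half of the desired partition by adjoining a suitable zero-sum subset of the leftovers to enough duplicate pairs to reach size $2^{n-2}$.

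First I would let $u_1, \dots, u_t$ be the distinct values among the $v_i$ with multiplicities $m_j$, and set $O = \{u_j : m_j \text{ odd}\}$ with $r = |O|$. Reducing $\sum m_j = 2^{n-1}$ and $\sum m_j u_j = 0$ modulo $2$ shows that $r$ is even and $\sum_{u \in O} u = 0$; also $r \ne 2$, since otherwise two distinct $u_j$'s would be forced equal. Pairing up duplicates produces $(2^{n-1}-r)/2$ pairs $\{u,u\}$, each summing to $0$, together with one copy of each element of $O$. It then suffices to find a zero-sum subset $T \subseteq O$ of even size satisfying $\max(0, r - 2^{n-2}) \le |T| \le 2^{n-2}$, because then $A = T$ together with $(2^{n-2}-|T|)/2$ of the duplicate pairs has size exactly $2^{n-2}$ and sum $0$, and its complement does too.

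If $r \le 2^{n-2}$ the choice $T = \emptyset$ works. The main case is $r > 2^{n-2}$; since $r \le 2^d - 1$ where $d = \dim \spn\{v_i\}$, this forces $d = n-1$, so the $u_j$ lie in a hyperplane $H \cong \mathbb{F}_2^{n-1}$. A short complement argument (the set $H^* \setminus O$ has sum $0$ but cannot have size $1$) further gives $r \le 2^{n-1} - 4$. In this case I plan to build $T$ iteratively as a disjoint union of zero-sum $4$-subsets of $O$, extracting one at each step via pigeonhole on the pair-sum map $\{w,w'\} \mapsto w+w'$ from pairs in $O \setminus T$ into $H \setminus \{0\}$: two distinct pairs with equal sum must be disjoint (else distinctness of $O$ fails), giving a zero-sum $4$-subset. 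The iteration stops as soon as $|T|$ first enters the target window $[r - 2^{n-2}, 2^{n-2}]$.

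The main obstacle is this iteration in the large-$r$ case: I need to verify that the pigeonhole estimate $\binom{|O \setminus T|}{2} > 2^{n-1} - 1$ persists at every step and that $|T|$ lands inside the window without overshooting $2^{n-2}$. Both follow from elementary bookkeeping once I show that $|O \setminus T| \ge 2^{n-2} + 2$ at the final step, which is guaranteed by $r \le 2^{n-1} - 4$ together with an analysis of $r - 2^{n-2}$ modulo $4$. The small cases $n = 3, 4$ automatically satisfy $r \le 2^{n-2}$ and are absorbed into the trivial case, never invoking the iteration.
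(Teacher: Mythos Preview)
Your argument is correct. Both your proof and the paper's share the same opening move --- separate out the set $O$ of values appearing with odd multiplicity, pair up the rest, and reduce to finding a zero-sum subset $T\subseteq O$ of the right size --- but the two diverge in how that zero-sum subset is produced. The paper handles $n\le 5$ by invoking the known validity of Conjecture~\ref{conj:main_conj} in those dimensions, and for $n\ge 6$ it repeatedly peels off zero-sum subsets of size at most $n$ coming from linear dependence in the $(n-1)$-dimensional span, with separate bookkeeping for $n=6$ and $n>6$ to control parity and overshoot. Your route instead extracts zero-sum $4$-subsets via the pair-sum pigeonhole, which gives a uniform step size and makes the window analysis (using $r\le 2^{n-1}-4$ and $|O\setminus T|\ge 2^{n-2}+2$) clean. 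What you gain is a fully self-contained proof that never appeals to the $n\le 5$ case of Conjecture~\ref{conj:main_conj} and avoids the ad~hoc case split at $n=6$; what the paper's approach buys is a closer tie to the linear-algebraic viewpoint used elsewhere in Section~\ref{sec:conj_progress}. Either way the lemma follows.
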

\begin{proof}
We provide an algorithm for partitioning $v_1, \dots, v_{2^{n-1}}$ into subsets $A$ and $B$, each of size $2^{n-2}$. If $n \leq 5$, let $S$ be some $(n-1)$-dimensional subspace of $\mathbb{F}_2^n$ such that $\spn\{v_1, \dots, v_{2^{n-1}}\}$ is a subspace of $S$, and let $S^\complement = \mathbb{F}_2^n - S$ be the $(n-1)$-dimensional affine subspace consisting of all vectors in $\mathbb{F}_2^n$ in the complement of $S$. Because Conjecture~\ref{conj:main_conj} is known to hold, there exists a partition of $\mathbb{F}_2^n$ into pairs $(p_i, q_i)$ such that $p_i + q_i = v_i$ for all $1 \leq i \leq 2^{n-1}$. Furthermore, by assumption, $p_i$ and $q_i$ must either both be in $S$ or both be in $S^\complement$ for all $1 \leq i \leq 2^{n-1}$; if this were not true, then $v_i = p_i + q_i$ could not belong to $S$, a contradiction. Therefore it is sufficient to let $A = \{v_i : p_i, q_i \in S\}$ and $B = \{v_i : p_i, q_i \in S^\complement\}$, as then the sum of all elements in $A$ and $B$ is equal to the sum of all elements in $S$ and $S^\complement$ respectively, which is $0$ as $S$ and $S^\complement$ are affine subspaces of dimension greater than 1. 

Now assume $n \geq 6$. Assume without loss of generality that $v_1, \dots, v_l$ are the only distinct vectors occurring an odd number of times over all vectors $v_i$. Therefore $\sum_{i=0}^l v_i = 0$, and $l$ must be even, so also assume that $v_{2i-1} = v_{2i}$ for all $i > l/2$. We first present a method of partitioning $v_1, \dots, v_l$ into $A$ and $B$ such that neither subset has more than $2^{n-2}$ elements, each subset has an even number of elements, and the sum of the elements in each subset is 0. If $l \leq 2^{n-2}$, simply place all $v_1, \dots, v_l$ into $A$. Otherwise, note the following:
\begin{itemize}
\item For any $n$-vector subset $V$ of $\{v_1, \dots, v_l\}$, there exists some nonempty subset of $V$ having sum 0, as $\dim(\spn\{v_1, \dots, v_l\}) < n$.
\item It holds that $l \leq 2^{n-1} - 2$, as if $l = 2^{n-1}$ then $v_1, \dots, v_l$ form a complete $(n-1)$-dimensional subspace of $\mathbb{F}_2^n$, implying that some $v_i$ is equal to 0, which is a contradiction.
\end{itemize}
We use these observations to deal with the cases where $n = 6$ and $n > 6$ separately:
\begin{description}
\item[$n > 6$.] If $2^{n-2} < l \leq 2^{n-1} - 2n + 1$, then begin by placing all vectors $v_1, \dots, v_l$ in $A$. Then while $|A| > 2^{n-2}$, there must exist a nonempty subset $V$ of at most $n$ vectors in $A$ with a sum of zero; transfer all vectors in $V$ from $A$ to $B$, and repeat the process. If $|A|$ is odd at the end of this process, there must be a subset $V$ of at most $n$ vectors in $A$ with a sum of zero such that $|V|$ is odd; transfer all vectors in $V$ from $A$ to $B$, completing the partition.

If $2^{n-1} - 2n + 1 < l \leq 2^{n-1} - 2$, then by the pigeonhole principle there must be at least $\lceil n/2 \rceil$ disjoint 4-vector subsets of $\{v_1, \dots, v_l\}$ with sum 0; initially place these subsets in $B$, and all other $v_i$'s for $1 \leq i \leq l$ in $A$. Specifically, these 4-vector subsets with sum 0 can consist of all 4-vector subsets of $\{v_1, \dots, v_l\}$ of the form $\{x00, x01, x10, x11\}$, where $x$ is any $(n-2)$-length prefix, and the basis is chosen such that the first component of all $v_i$'s is $0$. Then carry out the procedure used for the case where $2^{n-2} < l \leq 2^{n-1} - 2n + 1$, but transfer sufficiently many of the 4-element subsets back to $A$ so that $|A| = 2^{n-2} - 2$ or $2^{n-2}$.

\item[$n = 6$.] If $26 \leq l \leq 30$, simply apply the method used for the case where $n > 6$. 
If $18 \leq l \leq 22$, first place all vectors $v_1, \dots, v_l$ in $A$. Then there must be some subset $V$ of 6, 8, or 10 vectors in $A$ with sum 0, so simply transfer the vectors in $V$ from $A$ to $B$ (The existence of $V$ can be seen by repeatedly applying the first observation above in order to partition $v_1, \dots, v_l$ into subsets of 3, 4, 5, or 6 vectors, each with sum 0). Otherwise $l = 24$ (as $l$ is even), so again place $v_1, \dots, v_l$ in $A$. It follows that there must be some subset $V$ of 8, 10, or 12 vectors in $A$ with sum 0, so transfer the vectors in $V$ from $A$ to $B$.
\end{description}

Then the partition of all the vectors is easily completed by partitioning the remaining vectors into $A$ and $B$ in any way such that $|A| = |B| = 2^{n-2}$ and $v_{2i-1}$ and $v_{2i}$ are either both in $A$ or both in $B$ for all $i > l/2$.
\end{proof}

Below, we resolve the case of Conjecture~\ref{conj:main_conj} for which $\dim(\spn\{v_1, \dots, v_{2^{n-1}}\}) \leq k$ if Conjecture \ref{conj:main_conj} is known to hold in $\mathbb{F}_2^k$.
\begin{proposition}
\label{thm:small_dimension}
If Conjecture \ref{conj:main_conj} holds in $\mathbb{F}_2^k$, then for any choice of $2^{n-1}$ nonzero vectors $v_1, \dots, v_{2^{n-1}} \in \mathbb{F}_2^n$ where $n \geq k$ and $\sum_{i=1}^{2^{n-1}} v_i = 0$ such that $\dim(\spn\{v_1, \dots, v_{2^{n-1}}\}) \leq k$, there exists a partition of $\mathbb{F}_2^n$ into pairs $(p_i, q_i)$ such that $p_i + q_i = v_i$ for all $1 \leq i \leq 2^{n-1}$. 
\end{proposition}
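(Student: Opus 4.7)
The plan is to prove the proposition by induction on $n \geq k$. For the base case $n = k$, the hypothesis $\dim(\spn\{v_1, \dots, v_{2^{n-1}}\}) \leq k$ is automatic (the span lies in $\mathbb{F}_2^n = \mathbb{F}_2^k$), so the conclusion follows directly from the assumption that Conjecture~\ref{conj:main_conj} holds in $\mathbb{F}_2^k$.

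For the inductive step, assume the proposition holds in dimension $n-1$, and let $v_1, \dots, v_{2^{n-1}} \in \mathbb{F}_2^n$ satisfy the hypotheses with $n > k$. Since $\dim(\spn\{v_1, \dots, v_{2^{n-1}}\}) \leq k < n$, Lemma~\ref{lem:partition} produces a partition $\{v_1, \dots, v_{2^{n-1}}\} = A \sqcup B$ with $|A| = |B| = 2^{n-2}$ and with the sum of vectors in each part equal to $0$. Choose an $(n-1)$-dimensional linear subspace $S_0 \subseteq \mathbb{F}_2^n$ containing $\spn\{v_1,\dots,v_{2^{n-1}}\}$, and let $S_1 = \mathbb{F}_2^n \setminus S_0$ be its coset; then $A, B \subseteq S_0$.

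Next, identify $S_0$ with $\mathbb{F}_2^{n-1}$. The vectors in $A$ are $2^{n-2} = 2^{(n-1)-1}$ nonzero elements of $\mathbb{F}_2^{n-1}$ that sum to $0$ and whose span has dimension at most $k$, so the inductive hypothesis produces a partition of $S_0$ into pairs $(p_i, q_i)$ with $p_i + q_i = v_i$ for each $v_i \in A$. For $B$, fix any $c \in S_1$, so $S_1 = c + S_0$; applying the inductive hypothesis to $B$ inside $S_0 \cong \mathbb{F}_2^{n-1}$ and then translating every pair by $c$ gives a partition of $S_1$ into pairs $(p_j, q_j)$ with $p_j + q_j = v_j$ for each $v_j \in B$ (the translation preserves pairwise sums in characteristic $2$). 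Concatenating the two partitions yields the required partition of $\mathbb{F}_2^n$.

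The substantive work is really all done by Lemma~\ref{lem:partition}; after it provides the two balanced zero-sum halves, the only step requiring attention is the translation argument that lets us treat pairs inside the affine coset $S_1$ by transporting them to the linear subspace $S_0$ where the inductive hypothesis applies. The size, non-vanishing, zero-sum, and span-dimension conditions all transfer verbatim from $\{v_1, \dots, v_{2^{n-1}}\}$ to $A$ and $B$, so no further obstacle arises.
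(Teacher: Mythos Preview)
Your proof is correct and follows essentially the same strategy as the paper's: both rely on Lemma~\ref{lem:partition} to split the $v_i$'s into balanced zero-sum pieces and then handle each piece inside a coset of a subspace containing $\spn\{v_i\}$, with the translation trick to move between cosets. The only cosmetic difference is that you package the argument as an induction on $n$ (halving once per step), whereas the paper does the recursion all at once, partitioning directly into $2^{n-k}$ subsets of size $2^{k-1}$ and invoking the $\mathbb{F}_2^k$ case on each; unrolling your induction recovers exactly this.
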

\begin{proof}
Let $S$ be some $k$-dimensional subspace of $\mathbb{F}_2^n$ such that $\spn\{v_1, \dots, v_{2^{n-1}}\}$ is a subspace of $S$. Partition the $2^n$ vectors in $\mathbb{F}_2^n$ into $2^{n-k}$ distinct affine subspaces $S_1, \dots, S_{2^{n-k}}$ obtained by translating $S$ by translation vectors $t_1, \dots, t_{2^{n-k}}$ respectively; $S_i$ consists of each vector in $S$ added to $t_i$. Likewise, partition $\{v_1, \dots, v_{2^{n-1}}\}$ into $2^{n-k}$ subsets $V_1, \dots, V_{2^{n-k}}$, each of size $2^{k-1}$ and with sum 0; such a partition can be constructed by recursively applying Lemma \ref{lem:partition}. Because Conjecture \ref{conj:main_conj} holds in $\mathbb{F}_2^k$ by assumption, for each $V_i = \{v_{i,1}, \dots, v_{i,2^{k-1}}\}$ there exists a partition of $S$ into pairs of vectors $(p_{i,j}, q_{i,j})$ for $1 \leq j \leq 2^{k-1}$ such that $p_{i,j} + q_{i,j} = v_{i,j}$ for all $j$. Then let $(p_{i,j}', q_{i,j}') = (p_{i,j} + t_i, q_{i,j} + t_i)$. It follows that the pairs $(p_{i,j}', q_{i,j}')$ form a partition of the elements of $S_i$, and $p_{i,j}' + q_{i,j}' = v_{i,j}$ for all $j$. Therefore, as the set of all vectors $v_{i,j}$ is equivalent to the set of all vectors $v_i$, and all $S_i$ are disjoint, the pairs $(p_{i,j}', q_{i,j}')$ give a desired partition of $\mathbb{F}_2^n$.
\end{proof}

Because Conjecture~\ref{conj:main_conj} was shown in \cite{balister_coloring_2011} to hold in $\mathbb{F}_2^2, \dots, \mathbb{F}_2^5$, the following corollary directly follows from Proposition~\ref{thm:small_dimension}.
\begin{corollary}
\label{cor:dim_5}
For any $2^{n-1}$ nonzero vectors $v_1, \dots, v_{2^{n-1}} \in \mathbb{F}_2^n$ such that $\sum_{i=1}^{2^{n-1}} v_i = 0$ and $\dim(\spn\{v_1, \dots, v_{2^{n-1}}\}) \leq 5$, there exists a partition of $\mathbb{F}_2^n$ into pairs $(p_i, q_i)$ such that $p_i + q_i = v_i$ for all $1 \leq i \leq 2^{n-1}$.
\end{corollary}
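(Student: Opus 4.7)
The plan is that this follows immediately by combining Proposition~\ref{thm:small_dimension} with the result of \cite{balister_coloring_2011} that Conjecture~\ref{conj:main_conj} holds in $\mathbb{F}_2^k$ for every $k \in \{2,3,4,5\}$. So there is essentially no new work; the proof is a bookkeeping exercise.

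Concretely, I would split into two cases depending on $n$. If $n \leq 5$, then $\{v_1,\dots,v_{2^{n-1}}\}$ already lies in $\mathbb{F}_2^n$ and the condition $\sum v_i = 0$ holds by hypothesis; the dimension restriction is automatic and the conclusion is exactly Conjecture~\ref{conj:main_conj} in $\mathbb{F}_2^n$, which is known in this range. If $n \geq 5$, I would apply Proposition~\ref{thm:small_dimension} directly with $k=5$: the hypotheses $\sum v_i = 0$, $\dim(\spn\{v_1,\dots,v_{2^{n-1}}\}) \leq 5$, and $n \geq k = 5$ are all satisfied, and the conclusion of the proposition is precisely the desired partition statement. (The small overlap at $n=5$ is harmless since both routes give the same conclusion.)

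There is no substantive obstacle here, since Proposition~\ref{thm:small_dimension} has already done the work of reducing a span-$\leq k$ problem in $\mathbb{F}_2^n$ to Conjecture~\ref{conj:main_conj} in $\mathbb{F}_2^k$, and Lemma~\ref{lem:partition} has already supplied the necessary even partitioning. The only thing worth being careful about is the edge case $n < k = 5$, which one could also avoid entirely by simply remarking that the hypotheses of the corollary then coincide with those of Conjecture~\ref{conj:main_conj} in $\mathbb{F}_2^n$, which holds by \cite{balister_coloring_2011}. Thus the entire corollary can be stated as a one-line consequence of Proposition~\ref{thm:small_dimension} together with the known base cases.
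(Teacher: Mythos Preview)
Your proposal is correct and matches the paper's own argument essentially verbatim: the corollary is stated as an immediate consequence of Proposition~\ref{thm:small_dimension} together with the verification in \cite{balister_coloring_2011} that Conjecture~\ref{conj:main_conj} holds in $\mathbb{F}_2^2,\dots,\mathbb{F}_2^5$. Your explicit handling of the $n<5$ edge case is a harmless elaboration the paper leaves implicit.
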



As a result of Corollary \ref{cor:dim_5}, set-sequential trees can easily be constructed in the following manner. For any given $2^{n-1}$-vertex set-sequential tree $T$, consider any tree $T'$ obtained by adding a total of $2^{n-1}$ pendant edges to at most 5 distinct vertices in $T$, such that each vertex receives an even number of new pendant edges. Then $T'$ must be a set-sequential $2^n$-vertex tree. By using stars and caterpillars of diameter 4 or 5, which are known to be set-sequential, as base cases, this method already provides a way of inductively showing the set-sequentialness of infinitely many trees. We explore this idea in more detail in Section~\ref{subsec:app_conj}.



In many cases, when partitioning the $v_i$'s into the subsets $V_j$ as in the proof of Proposition~\ref{thm:small_dimension}, there exists a partition for which the span of the elements of each subset has dimension less than the dimension of the span of all the $v_i$'s. This idea is applied in the result below, which proves another restricted case of Conjecture~\ref{conj:main_conj}.

\begin{lemma}
\label{lem:n_min_1_values}
Let $v_1, \dots, v_{2^{n-1}} \in \mathbb{F}_2^n$ satisfy $v_{2i-1} = v_{2i}$ for all $1 \leq i \leq 2^{n-2}$ and let there be at most $l < n$ distinct vectors $u_1, \dots, u_l$ among the $v_i$'s, so that each $v_i$ is equal to some value in $u_1, \dots, u_l$. Then there exists a partition of $\mathbb{F}_2^n$ into pairs $(p_i, q_i)$ such that $p_i + q_i = v_i$ for all $1 \leq i \leq 2^{n-1}$.
\end{lemma}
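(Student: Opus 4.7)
The plan is to induct on $n$. The base case $n \leq 5$ follows from Conjecture~\ref{conj:main_conj} itself, which is proven in these dimensions in~\cite{balister_coloring_2011}. For $n \geq 6$, set $W = \spn\{u_1, \dots, u_l\}$ and $d = \dim W \leq l < n$. If $d \leq 5$, Corollary~\ref{cor:dim_5} handles the case directly, so I may assume $d \geq 6$, which forces $l \geq d \geq 6$ and $n \geq l + 1 \geq 7$.

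I would then imitate the strategy of Proposition~\ref{thm:small_dimension}: partition $\mathbb{F}_2^n$ into its $2^{n-d}$ cosets of $W$ and produce a pairing inside each coset separately, identifying the coset with $\mathbb{F}_2^d$ by translation. The hypothesis $v_{2i-1} = v_{2i}$ groups the $v_i$'s into $2^{n-2}$ pair-groups, each labeled by some $u_j$ and contributing two pairs of $\mathbb{F}_2^n$-elements at common difference $u_j$. Writing $k_j$ for the number of pair-groups labeled $u_j$ (with $\sum_j k_j = 2^{n-2}$), the central combinatorial step is to distribute the pair-groups among the cosets so that (i) each coset receives exactly $2^{d-2}$ pair-groups, matching the $2^{d-1}$ pairs contained in a $2^d$-element affine subspace, and (ii) each coset uses at most $d-1$ distinct values from $\{u_1, \dots, u_l\}$. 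Condition (ii) is what makes the induction go through: each subproblem, viewed inside $\mathbb{F}_2^d$, then has $l' \leq d - 1 < d$ distinct values and inherits the pair-equality structure (since pair-groups move as units), so the induction hypothesis applies (as $d < n$); if $l' \leq 5$, Corollary~\ref{cor:dim_5} closes the subproblem without further recursion.

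Feasibility of the distribution is a transportation problem with support constraints. Letting $b_{j,c}$ count the $u_j$-labeled pair-groups assigned to coset $c$ and $e_j = |\{c : b_{j,c} = 0\}|$ count the cosets that exclude $u_j$, condition (ii) requires $\sum_j e_j \geq (l - d + 1) 2^{n-d}$, while $\sum_c b_{j,c} = k_j$ together with the column-sum cap $\sum_j b_{j,c} = 2^{d-2}$ forces $e_j \leq 2^{n-d} - k_j / 2^{d-2}$; summing the upper bounds gives $(l-1) 2^{n-d}$, so the two conditions are compatible iff $d \geq 2$, which is automatic. Since $d \geq 6$, there is ample slack to round to nonnegative integer $b_{j,c}$, essentially by filling the cosets column by column in decreasing order of $k_j$. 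I expect the main obstacle to be this integer realization of the transportation plan, particularly in the extreme case where a single $k_j$ is close to $2^{n-2}$, so that $u_j$ must appear in nearly every coset and the other exclusions have to be balanced delicately; verifying this may require a more careful greedy or flow-based argument than the pure averaging check suggests. Once a valid distribution is in hand, applying the lemma inductively (or Corollary~\ref{cor:dim_5}) inside each coset and taking the union of the resulting pairings produces the desired partition of $\mathbb{F}_2^n$.
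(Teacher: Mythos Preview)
Your overall strategy---reduce to subproblems in a lower-dimensional space with strictly fewer distinct values, then invoke the inductive hypothesis---is exactly the paper's idea. The difference is one of granularity: you jump all the way down to dimension $d = \dim W$ in a single step, splitting $\mathbb{F}_2^n$ into $2^{n-d}$ cosets of $W$, whereas the paper descends just one dimension at a time, splitting into the two cosets of any hyperplane $S$ containing $W$.

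That single-step descent makes the distribution problem trivial and removes precisely the gap you flagged. With only two bins $V_1, V_2$ of size $2^{n-2}$ each, the paper observes that if $l \geq 3$ then by pigeonhole at most one $u_j$ can occur more than $2^{n-2}$ times, so there exist two values---say $u_1, u_2$---each occurring at most $2^{n-2}$ times. Put all copies of $u_1$ into $V_1$, all copies of $u_2$ into $V_2$, and distribute the remaining pair-groups arbitrarily to balance sizes. Each half then has at most $l-1 \leq n-2 < n-1$ distinct values, so the inductive hypothesis in $\mathbb{F}_2^{n-1}$ applies directly. No transportation/rounding argument is needed at all.

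Your version would work too if the integer-realization step were completed, but as you yourself note, that step is not established; the averaging check only shows fractional feasibility. The cleanest fix is not a flow argument but simply to replace the $2^{n-d}$-way split by the paper's $2$-way split and iterate---which is equivalent to, and in fact is, the paper's proof.
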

\begin{proof}
We show the result through induction on $n$. For the base case, Conjecture \ref{conj:main_conj} is true for $\mathbb{F}_2^2$. For the inductive step, assume that the result holds for all choices of the vectors $v_i$ in $\mathbb{F}_2^{n-1}$. Let $v_1, \dots, v_{2^{n-1}} \in \mathbb{F}_2^n$ be any vectors such that $v_{2i - 1} = v_{2i}$ and such that each $v_i$ is equal to some $u_1, \dots, u_l$, where $l < n$. Let $S$ be any $(n-1)$-dimensional subspace of $\mathbb{F}_2^n$ such that $\spn\{v_1, \dots, v_{2^{n-1}}\}$ is a subspace of $S$. If $l \leq 2$, we are done by Corollary \ref{cor:dim_5}.

Otherwise, if $l \geq 3$, assume without loss of generality that neither $u_1$ nor $u_2$ occur more than $2^{n-2}$ times among the $v_i$'s. Place all copies of $u_1$ into $V_1$ and all copies of $u_2$ into $V_2$, then distribute the rest of the $v_i$'s into $V_1$ and $V_2$ in any way such that each vector still appears an even number of times in $V_1$ and in $V_2$, and such that $|V_1| = |V_2| = 2^{n-2}$. Then let $S_1 = S$ and $S_2$ be the unique affine subspace obtained by translating $S_1$. Then by the inductive hypothesis the elements of $S_1$ and $S_2$ can be partitioned into pairs summing to the elements of $V_1$ and $V_2$ respectively, as both $V_1$ and $V_2$ contain at most $n-2$ distinct vectors by construction.
\end{proof}

Although the condition of having an even number of copies of each vector may seem arbitrary, it is in fact motivated by set-sequential trees. We mostly focus on trees with only odd-degree vertices due to Conjecture~\ref{conj:odd_deg_trees}, and therefore adding an odd number of pendant edges to a vertex, which corresponds to having an odd number of copies of some vector $v_i$, is often not useful. Furthermore, as the sum of a vector and itself is 0 in $\mathbb{F}_2^n$, the condition that $\sum_{i=1}^{2^{n-1}} v_i = 0$ is automatically met, so we can use our progress on Conjecture \ref{conj:main_conj} to inductively produce set-sequential trees without having to keep track of the actual labels of the vertices.

We now improve the result in Corollary \ref{cor:dim_5} by showing that Conjecture \ref{conj:main_conj} holds if each vector occurs an even number of times and if $\dim(\spn\{v_1, \dots, v_{2^{n-1}}\}) \leq n/2$, thereby improving a constant bound on the dimension of the span of the vectors to a linear bound.

\begin{proposition}
\label{lem:dim_span_n_div_2}
For any set of $2^{n-1}$ nonzero vectors $v_1, \dots, v_{2^{n-1}} \in \mathbb{F}_2^n$ such that $v_{2i - 1} = v_{2i}$ for all $1 \leq i \leq 2^{n-2}$ and $\dim(\spn\{v_1, \dots, v_{2^{n-1}}\}) \leq n/2$, there exists a partition of $\mathbb{F}_2^n$ into pairs $(p_i, q_i)$ such that $p_i + q_i = v_i$ for all $1 \leq i \leq 2^{n-1}$.
\end{proposition}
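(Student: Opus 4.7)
\emph{Proof proposal.} The plan is to reduce the statement to Lemma~\ref{lem:n_min_1_values} via a coset-decomposition argument in the spirit of Proposition~\ref{thm:small_dimension}. Let $k=\dim(\spn\{v_1,\dots,v_{2^{n-1}}\})$, so $k\le n/2$ by hypothesis. If $k\le 5$ the conclusion is immediate from Corollary~\ref{cor:dim_5}, so I may assume $k\ge 6$, which in particular forces $n\ge 12$. Let $S$ denote the span, and view $\mathbb{F}_2^n$ as the disjoint union of its $2^{n-k}$ cosets of $S$. For any pair $(p,q)$ with $p+q\in S$, both $p$ and $q$ lie in the same coset, so any valid partition of $\mathbb{F}_2^n$ restricts to a partition of each coset $C$ into $2^{k-1}$ pairs summing to some prescribed multiset $V_C\subseteq S$. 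It therefore suffices to distribute the $2^{n-2}$ ordered pairs $(v_{2i-1},v_{2i})$ into $2^{n-k}$ groups of $2^{k-2}$ pairs each, one group $V_u$ per coset, and then solve an instance of Conjecture~\ref{conj:main_conj} in $\mathbb{F}_2^k\cong S$ inside each coset.

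Because each pair $(v_{2i-1},v_{2i})$ sums to zero, every group automatically has sum zero and preserves the pairing condition. The key step is to arrange the pair-to-coset assignment so that every $V_u$, viewed as a multiset in $\mathbb{F}_2^k$, contains at most $k-1$ distinct vectors; Lemma~\ref{lem:n_min_1_values} then produces a partition of $C$ into pairs summing to the vectors of $V_u$ inside each coset, and gluing across cosets yields the desired partition of $\mathbb{F}_2^n$. The hypothesis $k\le n/2$ is precisely what makes this cap achievable in aggregate: writing $l$ for the number of distinct vectors among the $v_i$'s and $m_j$ for their pair-multiplicities, one has $l\le 2^k-1\le 2^{n-k}(k-1)$, and since $\sum_j m_j=2^{n-2}$ with $l\le 2^k-1<2^{n-k}$, at least one ``large'' vector must satisfy $m_j>2^{k-2}$.

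To construct the assignment explicitly I would sort the distinct vectors by decreasing pair-multiplicity $m_1\ge m_2\ge\cdots\ge m_l$, use the high-multiplicity vectors to fill as many monochromatic groups as possible, and cluster the low-multiplicity vectors into ``mixed'' groups that each hold at most $k-2$ small labels together with one large label used as a filler to bring the group up to $2^{k-2}$ pairs. Every mixed group then contains at most $1+(k-2)=k-1$ distinct vectors, and every pure group contains exactly one, so the distinct-vector cap holds throughout. The main obstacle is to verify the accounting in \emph{every} multiplicity profile, especially the regime where nearly all pair-mass is concentrated on a single label (so essentially every group needs that same label as filler) and the dual regime with several comparably large labels sharing the filler role. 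The identities $\sum_j m_j=2^{n-2}=2^{n-k}\cdot 2^{k-2}$, together with the flexibility to trade a pure group of a large label for a mixed group that consumes a controlled fraction of its pairs, should close the argument in each case.
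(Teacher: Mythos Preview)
Your overall strategy---pass to cosets of a subspace containing $\spn\{v_i\}$ and solve independently on each coset---matches the paper's, but you diverge in two ways. First, you take cosets of the $k$-dimensional span $S$ itself (so $2^{n-k}$ cosets of size $2^{k}$), aiming to cap the number of distinct values in each group at $k-1$ and invoke Lemma~\ref{lem:n_min_1_values}. The paper instead takes the $2^{k}$ cosets of an $(n-k)$-dimensional subspace containing $S$, and drives the distinct-value count in each group all the way down to at most $3$, so that Corollary~\ref{cor:dim_5} finishes each coset. Second, and more importantly, the paper achieves its cap by an explicit recursive halving: at each of $k$ steps, split every current group in two by sending the odd-ranked values (by multiplicity) to one side and the even-ranked to the other, then rebalance using the most frequent value. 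A one-line recurrence shows the distinct-count drops from $\le 2^{k}$ to $\le 3$ after $k$ halvings. This is clean and requires no case analysis.

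Your proposal, by contrast, leaves the crucial step open: you describe a ``sort by multiplicity, fill monochromatic groups, then mix small labels with a large filler'' scheme, but you explicitly flag the accounting as an unresolved obstacle and close with ``should close the argument.'' That is a genuine gap. The difficulty is real: for instance, when many values have pair-multiplicity just above $2^{k-2}$, each leaves a small residue after filling one pure group, and packing those residues into mixed groups of size exactly $2^{k-2}$ with at most $k-1$ labels and with each label still appearing an even number of times is a nontrivial bin-packing problem that your sketch does not settle. A quick double-counting check (comparing $\sum_j\lceil m_j/2^{k-2}\rceil$ to $(k-1)2^{n-k}$) shows there is no obvious obstruction, but that is only a necessary condition. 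If you want to salvage your coset choice, the simplest fix is to borrow the paper's halving procedure verbatim: run it $n-k\ge k$ times to obtain $2^{n-k}$ groups, each with at most $3\le k-1$ distinct values (since you already reduced to $k\ge 6$), and then your appeal to Lemma~\ref{lem:n_min_1_values} goes through.
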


\begin{proof}
Let $S = \spn\{v_1, \dots, v_{2^{n-1}}\}$ and $k = \dim(S)$. We now present an algorithm to partition $v_1, \dots, v_{2^{n-1}}$ into $2^k$ subsets of size $2^{n-1-k}$, each containing at most 3 distinct values, and with an even number of copies of each value. At the beginning, there is a single subset, consisting of all vectors $\{v_1, \dots, v_{2^{n-1}}\}$. The algorithm consists of $k$ steps, during each of which every existing subset is partitioned into two subsets of equal size in the following manner.

Let $S_0$ be some subset at some point in the algorithm, and assume it is partitioned into $S_1$ and $S_2$. Let the distinct values in $S_0$ be $u_1, \dots, u_l$, and assume without loss of generality that they are sorted in increasing order of occurrence, so that if $u_i$ occurs fewer times than $u_j$ in $S_0$, then $i < j$. Then let $S_1$ contain all copies of $u_i$ in $S_0$ for odd $i$ and let $S_2$ contain all copies of $u_i$ in $S_0$ for even $i$. If $|S_1| > |S_2|$, then transfer $(|S_1| - |S_2|)/2$ copies of $u_l$ from $S_1$ back to $S_2$; perform the opposite operation if $|S_1| < |S_2|$.

It follows by construction that for any $S_0$ with $l$ distinct values, both $S_1$ and $S_2$ contain at most $l/2 + 1$ distinct vectors. Then, by induction, as there are at most $2^k$ distinct vectors in $\{v_1, \dots, v_{2^{n-1}}\}$, the number of distinct elements of some subset $S_0$ after $t$ steps of the algorithm is at most $$\frac{2^k}{2^t} + \sum_{x=0}^{t-1} \frac{1}{2^x},$$ so after all $k$ steps, the final number of distinct elements in each subset is at most 
\begin{equation}
\frac{2^k}{2^k} + \sum_{x = 0}^{k-1} \frac{1}{2^x} \leq 1 + 2 = 3.
\nonumber
\end{equation}

By assumption $\dim(S) = k \leq n - k$, so there exists a partition of $\mathbb{F}_2^n$ into $2^k$ affine subspaces obtained by translating some $(n-k)$-dimensional subspace containing $S$. These affine subspaces can then be paired with the subsets of $\{v_1, \dots, v_{2^{n-1}}\}$ so the problem is reduced to showing that Conjecture \ref{conj:main_conj} is true where there are at most 3 distinct vectors $v_i$. This result follows by Corollary \ref{cor:dim_5}.
\end{proof}


Now we prove a relatively restricted case of Conjecture \ref{conj:main_conj}, which can be used to prove the set-sequentialness of many small trees. We also generalize our proof in additional results below.
\begin{proposition}
\label{lem:1_more_even}
If Conjecture~\ref{conj:main_conj} holds in $\mathbb{F}_2^{n-1}$, then for any $v_1, \dots, v_{2^n-1} \in \mathbb{F}_2^n$ such that $v_{2i-1} = v_{2i}$ for all $1 \leq i \leq 2^{n-1}$, there exists a partition of $\mathbb{F}_2^n$ into pairs $(p_i, q_i)$ such that $p_i + q_i = v_i$ for all $1 \leq i \leq 2^{n-1}$.
\end{proposition}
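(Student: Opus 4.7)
The plan is to reduce the problem to Conjecture~\ref{conj:main_conj} in $\mathbb{F}_2^{n-1}$ by lifting a partition of a hyperplane. Setting $w_i := v_{2i-1} = v_{2i}$, the two pairs summing to $w_i$ together occupy four elements of $\mathbb{F}_2^n$ that form an affine 2-plane whose direction subspace contains $w_i$, and conversely any such 2-plane decomposes into two pairs summing to $w_i$. It therefore suffices to partition $\mathbb{F}_2^n$ into $2^{n-2}$ affine 2-planes $P_1, \dots, P_{2^{n-2}}$ with $w_i$ lying in the direction of $P_i$.

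To construct such a partition, I would fix a nonzero $t \in \mathbb{F}_2^n$ and a hyperplane $H$ with $t \notin H$, aiming for each $P_i$ to have the form $\{p_i, q_i, p_i + t, q_i + t\}$ where $\{p_i, q_i\} \subset H$ and $p_i + q_i = u_i \in H \setminus \{0\}$. The direction of $P_i$ is then $\{0, u_i, t, u_i + t\}$, and $P_i$ admits three decompositions into two pairs, with pair-sums $u_i$, $u_i + t$, or $t$ respectively. I therefore set $u_i = w_i$ when $w_i \in H$, $u_i = w_i + t$ when $w_i \in H + t$ and $w_i \neq t$, and leave $u_i$ as a free nonzero element of $H$ when $w_i = t$; the corresponding decomposition of $P_i$ then yields two pairs summing to $w_i$. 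Applying Conjecture~\ref{conj:main_conj} to $u_1, \dots, u_{2^{n-2}}$ in $\mathbb{F}_2^{n-1} \cong H$ yields the required pairs $(p_i, q_i)$ partitioning $H$, and these lift to the desired partition of $\mathbb{F}_2^n$.

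The crux is choosing $t$ and $H$ so that the $u_i$'s (including the free ones) are all nonzero in $H$ and sum to zero. A direct calculation shows that the zero-sum condition reduces to
\[ \sum_{i:\, w_i = t} u_i \;=\; \sum_i w_i + (M \bmod 2)\, t \quad \text{in } H, \]
where $M = |\{i : w_i \notin H\}|$. I would handle this by cases based on the repetition structure of the $w_i$. If some value appears more than once among $w_1, \dots, w_{2^{n-2}}$, I take $t$ to be that value, producing at least two free $u_i$'s and easily solving the sum condition. Otherwise, when all $w_i$ are distinct, I split according to whether $\sum_i w_i$ equals $0$, lies outside $\{w_i\}$, or equals some $w_k$, and take $t$ to be a vector outside $\{w_i\} \cup \{0\}$, $t = \sum_i w_i$, or $t = w_{j_0}$ for some $j_0 \neq k$, respectively. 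The main obstacle is this last subcase: the ``natural'' choice $t = \sum_i w_i = w_k$ forces the right-hand side of the displayed equation to vanish, preventing the lone free $u_k$ from being nonzero, so one must instead pick $t$ among the other $w_{j_0}$'s and verify that the right-hand side stays nonzero, which works because $w_k \neq 0$ and $w_k \neq w_{j_0}$ by distinctness.
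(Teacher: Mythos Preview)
Your proposal is correct and follows essentially the same route as the paper: fix a nonzero direction $t$ and a complementary hyperplane $H$, replace each doubled value $w_i$ by a nonzero $u_i\in H$ (with a free choice when $w_i=t$), apply the $\mathbb{F}_2^{n-1}$ case of the conjecture to the $u_i$, and lift each resulting pair $\{p_i,q_i\}\subset H$ to the affine $2$-plane $\{p_i,q_i,p_i+t,q_i+t\}$, which then splits into two pairs summing to $w_i$. The paper streamlines your case analysis by always taking $t=w_1$ (so there is exactly one free slot) and arranging the nonzero-sum condition by a single swap of pairs; you instead branch on whether some value repeats (giving $K\ge2$ free slots) or all $w_i$ are distinct (handling $K=0$ or $K=1$ according to $\sum_i w_i$). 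Both versions rest on the same lifting construction and the same parity identity $M\bmod 2=\phi\!\left(\sum_i w_i\right)$ for any linear functional $\phi$ with $\ker\phi=H$, which is what makes the displayed sum condition solvable in each of your subcases.
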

\begin{proof}
Assume without loss of generality that $v_1 = v_2 = 10\cdots 0$; we can change the basis to make $v_1$ = $v_2$ have this value. Also assume that $\sum_{i=2}^{2^{n-2}} v_{2i-1} \neq 0$; if it were 0 simply exchange $v_1$ and $v_2$ with any different pair. Define $v_i'$ to be $v_i$ with the first component switched to 0 if it was 1. Let $V'$ be the set containing $v_{2i-1}'$ for $2 \leq i \leq 2^{n-2}$, and the value $\sum_{i=2}^{2^{n-2}} v_{2i-1}'$. By assumption $V'$ contains $2^{n-2}$ nonzero elements in $\mathbb{F}_2^{n-1}$ summing to 0 (the vectors are technically in $\mathbb{F}_2^n$, but all have first component 0). Therefore, by assumption there exists a partition of $\mathbb{F}_2^{n-1}$ into pairs $(p_i', q_i')$ such that $p_i' + q_i'$ gives the $i$th element of $V'$, that is, $p_i' + q_i' = v_{2i-1}'$ for $2 \leq i \leq 2^{n-2}$ and $p_1' + q_1' = \sum_{i=2}^{2^{n-2}} v_{2i-1}'$.

We then construct a partition of $\mathbb{F}_2^n$ into pairs $(p_i, q_i)$ as follows. Let $p_1 = p_1'$ and $p_2 = q_1'$. If $v_{2i-1}$ has a 1 in the first component, let $p_{2i-1} = p_i'$ and $p_{2i} = q_i'$. Otherwise, let $p_{2i-1} = p_i'$ and $p_{2i} = p_i' + v_1$. Then, for all $1 \leq i \leq 2^{n-1}$, let $q_i = p_i + v_i$. It is easy to see that the four values $p_{2i-1}, p_{2i}, q_{2i-1}, q_{2i}$ are always equal to the four values $p_i', p_i' + v_1, q_i', q_i' + v_1$ in some order, so the pairs $(p_i, q_i)$ are a partition of $\mathbb{F}_2^n$.
\end{proof}
The following corollary directly follows from Proposition~\ref{lem:1_more_even}, as Conjecture~\ref{conj:main_conj} was verified to hold in $\mathbb{F}_2^5$ in \cite{balister_coloring_2011}.
\begin{corollary}
\label{cor:true_for_6}
For any $2^5$ vectors $v_1, \dots, v_{2^5} \in \mathbb{F}_2^6$ such that $v_{2i-1} = v_{2i}$ for all $1 \leq i \leq 2^4$, there exists a partition of $\mathbb{F}_2^6$ into pairs $(p_i, q_i)$ such that $p_i + q_i = v_i$ for all $1 \leq i \leq 2^5$.
\end{corollary}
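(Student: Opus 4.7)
The plan is to derive Corollary~\ref{cor:true_for_6} as an immediate instantiation of Proposition~\ref{lem:1_more_even} at $n=6$. The hypothesis of that proposition asks that Conjecture~\ref{conj:main_conj} hold in $\mathbb{F}_2^{n-1}$; for $n=6$ this means $\mathbb{F}_2^5$, which is exactly one of the cases already verified in \cite{balister_coloring_2011}. Substituting $n=6$ into the proposition's conclusion yields a partition of $\mathbb{F}_2^6$ into pairs $(p_i,q_i)$ with $p_i+q_i=v_i$ for $1\leq i\leq 2^5$, which is precisely the statement to be proved.

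The only verification needed is that the indexing conventions agree under the substitution. Proposition~\ref{lem:1_more_even} concerns $2^{n-1}$ vectors satisfying $v_{2i-1}=v_{2i}$ for $1\leq i\leq 2^{n-2}$; at $n=6$ this reads as $2^5$ vectors with $v_{2i-1}=v_{2i}$ for $1\leq i\leq 2^4$, in exact agreement with the hypothesis of Corollary~\ref{cor:true_for_6}. There is therefore no real obstacle, and no further argument is required; the corollary is worth recording separately only because the $n=6$ case is the specific input needed in the applications of Section~\ref{subsec:app_conj}, where it extends by one dimension the range in which Conjecture~\ref{conj:main_conj} is known (under the even-multiplicity assumption).
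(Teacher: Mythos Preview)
Your argument is correct and is exactly the paper's own derivation: the corollary follows immediately from Proposition~\ref{lem:1_more_even} with $n=6$, using the verification of Conjecture~\ref{conj:main_conj} in $\mathbb{F}_2^5$ from \cite{balister_coloring_2011}. Your check that the indexing matches is accurate, and nothing further is needed.
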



Below, we extend Lemma~\ref{lem:n_min_1_values} to the case where there are exactly $n$ different values among the $v_i$'s, which is difficult in that we cannot use our original induction because $\spn\{v_1, \dots, v_{2^{n-1}}\}$ can have dimension $n$. We use induction similar to that in Proposition~\ref{lem:1_more_even}, except that we maintain the property of each $v_i$ occurring an even number of times in the inductive step.
\begin{lemma}
\label{lem:n_values}
Let $v_1, \dots, v_{2^{n-1}} \in \mathbb{F}_2^n$ satisfy $v_{2i-1} = v_{2i}$ for all $1 \leq i \leq 2^{n-2}$ and let there be exactly $n$ distinct vectors $u_1, \dots, u_n$ among the $v_i$'s; each $v_i$ is equal to some value in $u_1, \dots, u_n$. Then there exists a partition of $\mathbb{F}_2^n$ into pairs $(p_i, q_i)$ such that $p_i + q_i = v_i$ for all $1 \leq i \leq 2^{n-1}$.
\end{lemma}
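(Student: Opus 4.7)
I would proceed by induction on $n$, following the template of Proposition~\ref{lem:1_more_even} but taking care to preserve the even-multiplicity hypothesis when passing to $\mathbb{F}_2^{n-1}$. The base cases $n \le 6$ are handled by the known instances of Conjecture~\ref{conj:main_conj}: for $n \le 5$ the conjecture itself is verified, and for $n = 6$ Corollary~\ref{cor:true_for_6} already proves a strictly stronger statement. So assume $n \ge 7$ and that Lemma~\ref{lem:n_values} holds in $\mathbb{F}_2^{n-1}$.

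Among the values $u_1, \ldots, u_n$ choose one of minimum multiplicity and, after a change of basis, rename it $u_1 = e_1 = 10\cdots 0$; let $a_k$ denote the multiplicity of $u_k$. Reserve a single pair-index $(v_1, v_2) = (u_1, u_1)$ as the ``special'' pair and project the remaining $v_{2i-1}$ into $\mathbb{F}_2^{n-1}$ by zeroing the first coordinate, producing vectors $v_{2i-1}'$ for $2 \le i \le 2^{n-2}$. Together with a correction vector $w = \sum_{i=2}^{2^{n-2}} v_{2i-1}'$ they form a multiset $V'$ in $\mathbb{F}_2^{n-1}$ of size $2^{n-2}$ whose sum is $0$. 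The lifting recipe of Proposition~\ref{lem:1_more_even} converts any partition of $\mathbb{F}_2^{n-1}$ realizing the pair-sums in $V'$ into a partition of $\mathbb{F}_2^n$ realizing the original $v_i$'s, so it suffices to produce such a partition in $\mathbb{F}_2^{n-1}$---either from the inductive hypothesis (if $V'$ has exactly $n - 1$ distinct values) or from Lemma~\ref{lem:n_min_1_values} (if it has fewer).

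The main obstacle is arranging for $V'$ to satisfy the required hypotheses: every vector in $V'$ must be nonzero, each distinct value must appear an even number of times, and the total number of distinct values cannot exceed $n - 1$. Nonzeroness fails whenever $v_{2i-1} = e_1$ for some $i \ge 2$, which is precisely why $u_1$ is taken of minimum multiplicity, and which may require reserving several special pairs with value $u_1$ simultaneously if $u_1$ still occurs more than twice. The even-multiplicity condition is the more subtle point: the multiplicity of $x \in \mathbb{F}_2^{n-1}$ in $V'$ equals $\tfrac{1}{2}\sum_{k:\,u_k' = x} a_k$, plus an extra $1$ if $x = w$, and neither contribution is automatically even. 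Since $u_k' = u_l'$ occurs precisely when $u_k + u_l = e_1$, the projections of the distinct values pair up in a predictable way between the hyperplane $x_1 = 0$ and its complement, so by judiciously choosing how many pair-indices of each value are promoted to special status---and by selecting $w$ so that its contribution absorbs a stray odd-parity term---one can steer the parities in $V'$ to all be even. Showing that such a choice always exists, while keeping the number of distinct values in $V'$ to at most $n - 1$, is the heart of the proof, and involves a case analysis driven by the residues $a_k \bmod 4$ and by the split of the $u_k$'s between the two hyperplanes.
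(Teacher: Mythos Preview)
Your framework matches the paper's---induction on $n$ with base cases $n\le 6$, and a Proposition~\ref{lem:1_more_even}-style lifting from $\mathbb{F}_2^{n-1}$---but two of your choices point in the wrong direction and leave a genuine gap.

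First, in this lifting only pair-indices with $v_{2i-1}=e_1$ can serve as ``free slots'' whose fill value $v_i'$ is unconstrained; for every other index the value $v_i'$ is forced to be the projection of $v_{2i-1}$. Your suggestion of promoting pair-indices ``of each value'' to special status therefore does not work: there are exactly $a_1/2$ free slots, and you need at least as many as there are projected values of odd multiplicity to repair. Choosing $u_1$ of \emph{minimum} multiplicity minimizes the very resource you need, and a single correction vector $w$ can absorb at most one stray parity, not up to $n-1$ of them. (Concretely: with $n=8$, $u_i=e_i$, $a_1=\cdots=a_7=2$ and $a_8=114$, your choice gives one free slot but seven odd multiplicities in $V'$, plus an eighth distinct value $w$.) The paper does the opposite: it takes $u_1$ to have \emph{maximum} multiplicity, so that by pigeonhole $a_1\ge 2(n-1)$ once $n\ge 8$, giving at least $n-1$ free slots---enough to add one extra copy of each $u_j$ with $a_j\equiv 2\pmod 4$ and fill the remainder with copies of $u_2$.

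Second, the paper sidesteps your collision worry ($u_k'=u_l'$ when $u_k+u_l=e_1$) by a dichotomy you omit. If $u_1,\dots,u_n$ are linearly dependent, their span lies in a hyperplane and one splits the multiset into two halves of size $2^{n-2}$, each with at most $n-1$ distinct values and even multiplicities, and applies the inductive hypothesis (together with Lemma~\ref{lem:n_min_1_values}) to each half exactly as in the proof of that lemma; no Proposition~\ref{lem:1_more_even} lifting is needed here. If they are independent, a basis change makes $u_i=e_i$, so the projections $u_2',\dots,u_n'$ are automatically distinct and nonzero, and the collision issue evaporates. With this dichotomy and the maximum-multiplicity choice in place, the ``case analysis driven by the residues $a_k\bmod 4$'' that you anticipate collapses to a one-line parity count, plus a short separate argument for $n=7$ where pigeonhole alone does not guarantee $a_1\ge 2(n-1)$.
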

\begin{proof}
We show the result by induction, using the results shown in Corollary \ref{cor:true_for_6} and in Lemma \ref{lem:n_min_1_values} as base cases; the result holds for $n \leq 6$.
For the inductive step, assume the result holds in $\mathbb{F}_2^{n-1}$. If $u_1, \dots, u_n$ are not all linearly independent, then we induct exactly as in Lemma \ref{lem:n_min_1_values}. Otherwise, assume that $u_1, \dots, u_n$ are all linearly independent. Assume without loss of generality that for $1 \leq i \leq n$, the vector $u_i$ has 1 at the $i$th component and 0 at all other components. This assumption is valid because $\{u_1, \dots u_n\}$ is a basis for $\mathbb{F}_2^n$, so we can simply switch to this basis. We now consider the following two cases, although the inductive step is nearly identical in each one:
\begin{description}
\item [$n \geq 8$.] It is easy to verify that one of $u_1, \dots, u_n$ must occur at least $2(n-1)$ times among $v_1, \dots v_{2^{n-1}}$ by the pigeonhole principle. Therefore assume without loss of generality that this value is $u_1$. Now construct the set $V' = \{v_1', \dots, v_{2^{n-2}}'\}$ as follows. First let $I_1, I_2, I_3$ be initialized as empty sets. For each $i$ such that $v_{2i-1}$ is not equal to $u_1$, let $v_i' = v_{2i-1}$, and place $i$ in $I_1$. Then for $2 \leq j \leq n$, if the number of copies of $u_j$ among $v_1, \dots, v_{2^{n-1}}$ is congruent to $2 \pmod{4}$, choose some $i$ not in $I_1$, let $v_i' = u_j$, and place $i$ in $I_2$ (such an $i$ always exists, as by assumption $|I_1| \leq 2^{n-1} - (n-1)$). Finally, for any $i$ not in $I_1$ or $I_2$, let $v_i' = u_2$ and place $i$ in $I_3$ (this decision is arbitrary; any value of $u_i$ for $i \geq 2$ would work).

By construction, each of $u_2, \dots, u_n$ must occur an even number of times in $V'$, and every element of $V'$ has a 0 at the first component, as $V'$ only consists of the values $u_2, \dots, u_n$. Therefore we can ignore the first component of the vectors in $V'$, and it follows by the inductive hypothesis there exists a partition of $\mathbb{F}_2^{n-1}$ into pairs $(p_i', q_i')$ such that $p_i' + q_i' = v_i'$ for $1 \leq i \leq 2^{n-2}$.

We then construct a partition of $\mathbb{F}_2^n$ into pairs $(p_i, q_i)$ as follows. For each $1 \leq i \leq 2^{n-2}$, if $i \in I_1$, let $p_{2i-1} = p_i'$ and $p_{2i} = p_i' + u_1$. Otherwise, $i \in I_2$ or $I \in I_3$, so $v_{2i-1} = v_{2i} = u_1$; in this case, let $p_{2i-1} = p_i'$ and $p_{2i} = q_i'$. Then let $q_i = p_i + v_i$ for all $1 \leq i \leq 2^{n-1}$. It is easy to verify that the four values $p_{2i-1}, p_{2i}, q_{2i-1}, q_{2i}$ are always equal to the four values $p_i', p_i' + u_1, q_i', q_i' + u_1$ in some order, so the pairs $(p_i, q_i)$ are a partition of $\mathbb{F}_2^n$.

\item [$n = 7$.] In this case, note that at least one of $u_1, \dots, u_7$ occurs at least 10 times among $v_1, \dots, v_{2^6}$, so we can assume without loss of generality that there at 10 copies of $u_1$. To use the same induction as above, note that if all of $u_2, \dots, u_7$ occured $2 \pmod{4}$ times among the $v_i$'s, then the total number of vectors $v_i$ would be congruent to $10 + 6 \cdot 2 \equiv 2 \pmod{4}$, which is a contradiction. Therefore at most 5 of the vectors $u_2, \dots, u_7$ occur $2 \pmod{4}$ times among the $v_i$'s, so the induction used for $n \geq 8$ still applies.
\end{description}
\end{proof}


The inductive step of the proof above is easily generalizable to any choice of the vectors $v_1, \dots, v_{2^{n-1}}$ with at most $2^{(n-2)/2}$ distinct values, assuming that $\sum_{i=1}^{2^{n-1}} v_i = 0$ and $v_{2i-1} = v_{2i}$ for all $1 \leq i \leq 2^{n-1}$ This is because it follows by the pigeonhole principle that some value must occur at least $2(x-1)$ times among the vectors $v_i$, where there are a total of $x$ distinct vectors among the $v_i$'s. However, it is very difficult to generalize Lemma \ref{lem:n_values} using this observation because the base case become much larger, as repeatedly applying the induction only reduces the number of distinct vectors among the $v_i$'s by 1 at each step. Therefore, for choices of $v_1, \dots, v_{2^{n-1}}$ with large $n$ and with approximately $2^{(n-2)/2}$ distinct values, the induction can only be applied a small number of times before the number of distinct values becomes too large to continue inducting.

Finally, we show below that Conjecture~\ref{conj:main_conj} holds for any choice of $v_1, \dots, v_{2^{n-1}} \in \mathbb{F}_2^n$ such that there are at most $n$ different values among the $v_i$'s. This result uses Lemmas \ref{lem:n_min_1_values} and \ref{lem:n_values}, but extends to the case where vectors do not necessarily occur an even number of times.
\begin{proposition}
\label{lem:n_vals_with_odd}
For any $2^{n-1}$ vectors $v_1, \dots, v_{2^{n-1}} \in \mathbb{F}_2^n$ satisfying $\sum_{i=1}^{2^{n-1}} v_i = 0$ and such that there are $l \leq n$ distinct vectors $u_1, \dots, u_l$ among the $v_i$'s, there exists a partition of $\mathbb{F}_2^n$ into pairs $(p_i, q_i)$ such that $p_i + q_i = v_i$ for all $1 \leq i \leq 2^{n-1}$.
\end{proposition}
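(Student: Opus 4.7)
The plan is to reduce the odd-multiplicity case to the even-multiplicity case handled by Lemmas~\ref{lem:n_min_1_values} and~\ref{lem:n_values}. First, since $\sum_i v_i = 0$ and each value of even multiplicity contributes $0 \pmod{2}$, the distinct values with odd multiplicity must themselves sum to $0$; in particular, there are an even number $2m$ of them. When $m = 0$ the relevant lemma applies directly, so we may assume $m \geq 1$.

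Pair up the odd-multiplicity values as $(u_{a_r}, u_{b_r})$ for $r = 1, \dots, m$ and construct a modified multiset $\tilde V$ from $V$ by replacing, for each $r$, one copy of $u_{b_r}$ with $u_{a_r}$. Then $|\tilde V| = 2^{n-1}$; the sum of $\tilde V$ equals the sum of $V$ plus $\sum_{r=1}^m (u_{a_r} + u_{b_r})$, which vanishes since the odd-multiplicity values of $V$ themselves sum to zero; the number of distinct values in $\tilde V$ is at most $n$; and every multiplicity in $\tilde V$ is even (the multiplicities of $u_{a_r}$ and $u_{b_r}$ each change parity). Thus Lemma~\ref{lem:n_values} (if there are exactly $n$ distinct values in $\tilde V$) or Lemma~\ref{lem:n_min_1_values} (if fewer) produces a partition $\tilde M$ of $\mathbb{F}_2^n$ whose pair-sums realize the multiplicities of $\tilde V$.

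Finally, I would convert $\tilde M$ into a partition $M$ matching the original multiset $V$: for each $r$, one pair in $\tilde M$ of sum $u_{a_r}$ must be replaced by a pair of sum $u_{b_r}$. A short argument shows that no re-pairing of four vectors taken from two pairs in $\tilde M$ can effect the required $(-1, +1)$ change in the counts of $u_{a_r}$- and $u_{b_r}$-labeled pairs alone (any such re-pairing either forces $u_{a_r} = u_{b_r}$ or introduces the same label on both sides), so the conversion must proceed along an alternating cycle of length at least $6$ in the Cayley graph $\mathrm{Cay}(\mathbb{F}_2^n, \{u_1, \dots, u_l\})$, whose edges alternate between pairs of $\tilde M$ and prospective pairs of $M$, with labels chosen so that the net effect is $-1$ for $u_{a_r}$, $+1$ for $u_{b_r}$, and $0$ for every other label.

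The main obstacle will be verifying the existence of such alternating cycles for every $r$ and coordinating the $m$ conversions sequentially so that they do not interfere with each other. The hypothesis $l \leq n$ should provide sufficient flexibility in the available labels to guarantee that suitable cycles exist, but this combinatorial verification, together with a careful ordering of the $r$'s, is the crux of the argument.
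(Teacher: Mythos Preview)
Your reduction to the even-multiplicity situation via Lemmas~\ref{lem:n_min_1_values} and~\ref{lem:n_values} is natural, but the conversion step has a parity obstruction that no cycle length can overcome. In any alternating cycle $x_0,x_1,\dots,x_{2t-1},x_0$ in $\mathbb{F}_2^n$, the sum of the $\tilde M$-edge labels and the sum of the $M$-edge labels are both equal to $\sum_j x_j$, hence equal to each other. Consequently, if the multiset of new labels along the cycle differs from the multiset of old labels only by replacing one copy of $u_{a_r}$ by one copy of $u_{b_r}$, then $u_{a_r}+u_{b_r}=0$, contradicting $u_{a_r}\neq u_{b_r}$. The same computation applies to any disjoint union of alternating cycles: the set of labels whose count changes by an odd integer must sum to $0$ in $\mathbb{F}_2^n$. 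Thus a swap with net effect $(-1,+1,0,\dots,0)$ on $(u_{a_r},u_{b_r},\dots)$ for a \emph{single} $r$ is impossible, and the sequential, $r$-by-$r$ scheme you describe cannot work. At best one could attempt a single global modification realising all $m$ changes simultaneously (the global parity constraint is then met, since the odd-multiplicity values sum to~$0$), but you give no argument for the existence of such a structure inside an arbitrary $\tilde M$, and this is far from clear.

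For contrast, the paper does not try to repair an even-multiplicity solution. It argues by induction on $n$: since the odd-multiplicity values $u_1,\dots,u_m$ sum to $0$, the $u_i$'s are linearly dependent and lie in a hyperplane. One then splits $\mathbb{F}_2^n$ into two or three affine translates of a suitable subspace and splits $\{v_1,\dots,v_{2^{n-1}}\}$ into matching pieces, each of the right size, with sum $0$ and at most $n-1$ distinct values, so that the inductive hypothesis applies on each piece. A short case analysis (on $l<n$, on $m\le n-2$, and on whether a proper zero-sum subset of $\{u_1,\dots,u_m\}$ exists) governs how the split is made; in the hardest case two auxiliary values $u_1',u_2'$ with $u_1'+u_2'=u_1+u_2$ are inserted into two of the pieces, and after solving each piece a single explicit re-pairing of four vectors replaces the pair-sums $u_1',u_2'$ by $u_1,u_2$ simultaneously. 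That two-at-a-time replacement is exactly what the parity constraint allows, and it is the reason the paper never attempts to alter one pair-sum while holding all others fixed.
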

\begin{proof}
We show the result by induction, using the fact that Conjecture \ref{conj:main_conj} holds in $\mathbb{F}_2^1, \dots, \mathbb{F}_2^5$ as well as Lemmas \ref{lem:n_min_1_values} and \ref{lem:n_values} and Corollary \ref{cor:true_for_6} as base cases. For the inductive step, assume the result holds in $\mathbb{F}_2^k$ for $2 \leq k \leq n - 1$, with $n \geq 6$. Furthermore, assume without loss of generality that $m$ is defined such that $u_1, \dots, u_m$ occur an odd number of times among $v_1, \dots, v_{2^{n-1}}$, and $u_{m+1}, \dots, u_l$ occur an even number of times. We can then assume that $m \geq 4$, as $m$ must be even, our base cases take care of $m = 0$, and $m$ cannot equal 2 because $\sum_{i=1}^m u_i = 0$ and all $u_i$ are distinct. Note that these assumptions imply that $\dim(\spn\{u_1, \dots, u_l\}) \leq l-1 \leq n -1$, as $u_1, \dots, u_l$ cannot be linearly independent. We consider the following cases independently, where each case assumes none of the previous ones were true.
\begin{description}
\item[$l < n$.] Partition $v_1, \dots, v_{2^{n-1}}$ into subsets $V_1$ and $V_2$ in the following manner. First place one copy of each vector $u_1, \dots, u_l$ into $V_1$. Then partition the remaining vectors among $V_1$ and $V_2$ in any way such that $|V_1| = |V_2| = 2^{n-2}$ and each vector occurs an even number of times in $V_2$. Then the result follows by applying the inductive hypothesis analogously to the way it is done in the proof of Lemma \ref{lem:n_min_1_values}.
\item[$m \leq n-2$.] We divide $v_1, \dots, v_{2^{n-1}}$ into subsets $V_1$ and $V_2$ as follows. First, place one copy of each vector $u_1, \dots, u_m$ in $V_1$. By the pigeonhole principle, there are at most $2^{n-2}$ copies of either $u_{n-1}$ or $u_n$ among $v_1, \dots, v_{2^{n-1}}$; assume without loss of generality that there at at most $2^{n-2}$ copies of $u_n$, and place all copies of $u_n$ in $V_2$. Then find the value $1 \leq i \leq n - 1$ for which $u_i$ has the fewest copies among $v_1, \dots, v_{2^{n-1}}$, and place all copies of $u_i$ in $V_1$. Then partition the remaining vectors among $V_1$ and $V_2$ in any way such that $|V_1| = |V_2| = 2^{n-2}$ and each vector occurs an even number of times in $V_2$. Then the result follows by applying the inductive hypothesis analogously to the way it is done in the proof of Lemma \ref{lem:n_min_1_values}.


\item[There exists a nonempty proper subset $U$ of $\{u_1, \dots, u_m\}$ with sum 0.] Partition \newline $v_1, \dots, v_{2^{n-1}}$ into subsets $V_1$ and $V_2$ as follows, noting that $|U| \geq 3$ and $$|\{u_1, \dots, u_m\} - U|  = m - |U| \geq 3.$$ If $|U|$ is even, place one copy of each element of $U$ in $V_1$, place one copy of each element of $\{u_1, \dots, u_m\} - U$ in $V_2$, then distribute the remaining vectors $v_i$ among $V_1$ and $V_2$ such that each subset receives an even number of additional copies of each vector, each subset has at most $n-1$ distinct vectors, and $|V_1| = |V_2| = 2^{n-2}$. It is easy to see such a partition exists; the distinctness condition is easily met using the pigeonhole principle to place all copies of some vector $v_{i_1}$ in $V_1$ and all copies of another vector $v_{i_2}$ in $V_2$.

Otherwise, if there exists no nonempty proper subset $U$ of $\{u_1, \dots, u_m\}$ with even size, then let $U_1 = U$ and $U_2 = \{u_1, \dots, u_m\} - U$. By assumption, $U_1$ and $U_2$ both have odd size and sum 0, but no nonempty proper subset of $U_1$ or of $U_2$ has sum 0; if there were such a subset, then there would exist a subset of $\{u_1, \dots, u_m\}$ of even size with sum 0. Furthermore, if there is some nonempty proper subset $U_3$ of $\{u_1, \dots, u_m\}$ and of odd size containing some elements of $U_1$ and some elements of $U_2$ with sum 0, then the symmetric difference $U_1 \oplus U_3$ would have sum 0 and even size, which is a contradiction. Therefore $\dim(\spn(U_1)) = |U_1| - 1$, $\dim(\spn(U_2)) = |U_2| - 1$, and $\dim(\spn(\{u_1, \dots, u_m\})) = m - 2$. It follows by the pigeonhole principle that there exists a basis for $\mathbb{F}_2^n$ for which some two vectors $u_i$ and $u_j$ with $i, j \leq m$ are prefixed with $01$ and all other vectors are prefixed with $00$, and such that the total number of copies of $u_i$ and $u_j$ combined is at most $2^{n-2}+2$. Then assume without loss of generality that $i = 1$ and $j = 2$, and proceed according to the case below, where no such set $U$ exists.
\item[No such set $U$ exists.] We give the proof for the case where $m = n$; the case where $m = n-1$ is nearly identical. Assume without loss of generality that $u_1 = 0100\dots0$, $u_2 = 0111\dots1$, all other $u_i$ are prefixed with $00$, and the number of copies of $u_1$ and $u_2$ combined is at most $2^{n-2} + 2$. Let the sets $V_1, V_2, V_3$ be constructed as follows. Define 
$$c = \begin{cases}
m/2+1 & \textrm{if } m \equiv 0 \pmod{4} \\
m/2 & \textrm{if } m \equiv 2 \pmod{4}. \\
\end{cases}$$
Begin by placing a single copy of $u_3, \dots, u_c$ in $V_1$ and a single copy of $u_{c+1}, \dots, u_m$ in $V_2$. After this step, $V_1$ and $V_2$ both have an odd number of elements. Then place the value $u_1' = \sum_{i=3}^c u_i$ in $V_1$, and the value $u_2' = \sum_{i=c+1}^m u_i$ in $V_2$, so that $V_1$ and $V_2$ both have sum 0. Finally, distribute all vectors in $\{v_1, \dots, v_{2^{n-1}}\} - \{u_1, \dots, u_m\}$ among $V_1, V_2, V_3$ such that
\begin{itemize}
\item $|V_1| = |V_2| = 2^{n-3}$ and $|V_3| = 2^{n-2}$,
\item All copies of $u_1$ and $u_2$ are in $V_3$,
\item There are at most $n-2$ distinct values in each $V_1$ and $V_2$ and at most $n-1$ distinct values in $V_3$,
\item Only $u_3, \dots, u_c$ occur an odd number of times in $V_1$, only $u_{c+1}, \dots, u_m$ occur an odd number of times in $V_2$, and no vector occurs an odd number of times in $V_3$.
\end{itemize}
By the pigeonhole principle, it must be possible to partition the vectors in this way. Note that all vectors in $V_1$ and $V_2$ are prefixed with $00$, and all vectors in $V_3$ are prefixed with $00$ or $01$. Then partition $\mathbb{F}_2^n$ into the affine subspaces $S_1$ consisting of all vectors prefixed with $10$, $S_2$ consisting of all vectors prefixed with $11$, and $S_3$ consisting of all vectors prefixed with $00$ or $01$. By the inductive hypothesis, for each $1 \leq i \leq 3$ there exists a partition of $S_i$ into pairs $(p_{i,j}', q_{i,j}')$ such that $p_{i,j}' + q_{i,j}' = v_{i,j}'$ for $1 \leq j \leq |V_i|$, where $v_{i,j}'$ denotes the $j$th element of $V_i$. Assume without loss of generality that $v_{1,1}' = u_1'$ and $v_{2,1}' = u_2'$. Then define the pairs $(p_{i,j}, q_{i,j})$ as follows. Let $$t = p_{1,1}' + p_{2,1}' + u_1,$$ so that $t$ is prefixed with $00$, and let $p_{1,1} = p_{1,1}'$, $q_{1,1} = p_{2,1}' + t$, $p_{2,1} = q_{1,1}'$, and $q_{2,1} = q_{2,1}' + t$. Then let 
\begin{alignat*}{3}
(p_{1,j}, q_{1,j}) &= (p_{1,j}', q_{1,j}') \hspace{.5cm} &&\textrm{ for } 2 \leq j \leq 2^{n-3} \\
(p_{2,j}, q_{2,j}) &= (p_{2,j}' + t, q_{2,j}' + t) \hspace{.5cm}&&\textrm{ for } 2 \leq j \leq 2^{n-3} \\
(p_{3,j}, q_{3,j}) &= (p_{3,j}', q_{3,j}') \hspace{.5cm} &&\textrm{ for }1 \leq j \leq 2^{n-2}.
\end{alignat*}
By this construction, the set of all sums $\{p_{i,j} + q_{i,j}\}$ for all possible $i$ and $j$ is equivalent to the set of all sums $\{p_{i,j}' + q_{i,j}'\}$ with $u_1 = p_{1,1} + q_{1,1}$ replacing $u_1' = p_{1,1}' + q_{1,1}'$ and $u_2 = p_{2,1} + q_{2,1}$ replacing $u_2' = p_{2,1}' + q_{2,1}'$. Therefore the set of all sums $\{p_{i,j} + q_{i,j}\}$ is equal to $\{v_1, \dots, v_{2^{n-1}}\}$, so the $(p_{i,j}, q_{i,j})$ form a valid partition of $\mathbb{F}_2^{n-1}$, completing the inductive step.
\end{description}
\end{proof}

The following theorem summarizes our progress on Conjecture \ref{conj:main_conj}. In Section \ref{sec:new_classes}, we apply the theorem to provide a method for inductively showing large classes of graphs to be set-sequential.
\begin{theorem}
\label{thm:all_conj_progress}
For any $2^{n-1}$ vectors $v_1, \dots, v_{2^{n-1}} \in \mathbb{F}_2^n$ such that $\sum_{i=1}^{2^{n-1}} v_i = 0$, if any of the following conditions is true, then there exists a partition of $\mathbb{F}_2^n$ into pairs $(p_i, q_i)$ such that $p_i + q_i = v_i$ for all $1 \leq i \leq 2^{n-1}$.
\begin{enumerate}
\item \label{case:dim_leq_5} $\dim(\spn\{v_1, \dots, v_{2^{n-1}}\}) \leq 5$.
\item \label{case:dim_6} $\dim(\spn\{v_1, \dots, v_{2^{n-1}}\}) = 6$ and each vector $v_i$ occurs an even number of times.
\item \label{case:n_distinct} There are at most $n$ distinct vectors in $\{v_1, \dots, v_{2^{n-1}}\}$.
\item \label{case:dim_n_div_2} $\dim(\spn\{v_1, \dots, v_{2^{n-1}}\}) \leq n/2$ and each vector $v_i$ occurs an even number of times.
\end{enumerate}
\end{theorem}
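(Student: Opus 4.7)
The plan is to observe that three of the four cases are essentially restatements of results already established in this section, and only Case~\ref{case:dim_6} requires any additional argument. Specifically, Case~\ref{case:dim_leq_5} is precisely Corollary~\ref{cor:dim_5}, Case~\ref{case:n_distinct} is precisely Proposition~\ref{lem:n_vals_with_odd}, and Case~\ref{case:dim_n_div_2} is precisely Proposition~\ref{lem:dim_span_n_div_2}. So the proof for these three cases consists of citing the relevant result.

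For Case~\ref{case:dim_6}, the plan is to mimic the proof of Proposition~\ref{thm:small_dimension} with $k=6$, using Corollary~\ref{cor:true_for_6} as the base case in place of the $k \leq 5$ result from~\cite{balister_coloring_2011}. Concretely, I would let $S$ be a $6$-dimensional subspace of $\mathbb{F}_2^n$ containing $\spn\{v_1, \dots, v_{2^{n-1}}\}$, write $\mathbb{F}_2^n$ as a disjoint union of the $2^{n-6}$ cosets of $S$ with translation vectors $t_1, \dots, t_{2^{n-6}}$, and partition $\{v_1, \dots, v_{2^{n-1}}\}$ into $2^{n-6}$ subsets $V_1, \dots, V_{2^{n-6}}$ of size $2^5$ each, with the properties that every $V_i$ has sum $0$ and that within each $V_i$ every distinct vector still occurs an even number of times. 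Corollary~\ref{cor:true_for_6} then supplies a pairing of the elements of $S$ matching each $V_i$ (after identifying $S$ with $\mathbb{F}_2^6$), and translating these pairs by $t_i$ assembles the required partition of $\mathbb{F}_2^n$.

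The main obstacle is producing the partition of the $v_i$'s with the additional even-occurrence property, since the naive application of Lemma~\ref{lem:partition} only guarantees equal size and sum $0$. The key observation is that because each $v_i$ occurs an even number of times globally, one may pair up equal copies and treat each pair as an indivisible unit throughout the recursive halvings. In the $n \geq 6$ branch of the proof of Lemma~\ref{lem:partition}, the list of vectors with odd multiplicity is empty in this setting, so the algorithm collapses to simply distributing the pairs between $A$ and $B$ so that $|A| = |B| = 2^{n-2}$; this automatically preserves both the sum-zero and the even-occurrence conditions on each half. Iterating this halving $n-6$ times yields the desired decomposition into $2^{n-6}$ blocks of size $2^5$, and the argument of Proposition~\ref{thm:small_dimension} then glues the local pairings together to complete the proof.
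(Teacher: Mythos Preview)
Your proposal is correct and matches the paper's own proof essentially line for line: the paper also cites Corollary~\ref{cor:dim_5}, Proposition~\ref{lem:n_vals_with_odd}, and Proposition~\ref{lem:dim_span_n_div_2} for Cases~\ref{case:dim_leq_5},~\ref{case:n_distinct},~\ref{case:dim_n_div_2}, and for Case~\ref{case:dim_6} it invokes Corollary~\ref{cor:true_for_6} together with Proposition~\ref{thm:small_dimension}, remarking only that the latter ``also holds for the case where each vector occurs an even number of times.'' Your explicit justification of that remark---that with $l=0$ the partitioning in Lemma~\ref{lem:partition} reduces to distributing indivisible equal pairs, so the even-multiplicity property is preserved under each halving---fills in exactly the detail the paper leaves implicit.
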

\begin{proof}
We cover the cases independently below.
\begin{enumerate}
\item This case is covered by Corollary \ref{cor:dim_5}.
\item This case is covered by Corollary \ref{cor:true_for_6} and Proposition~\ref{thm:small_dimension}, which also holds for the case where each vector occurs an even number of times.
\item This case is covered by Proposition~\ref{lem:n_vals_with_odd}.
\item This case is covered by Proposition \ref{lem:dim_span_n_div_2}.
\qedhere
\end{enumerate}
\end{proof}

\section{New classes of set-sequential trees}
\label{sec:new_classes}
In this section, we apply Theorem~\ref{thm:all_conj_progress} to show the set-sequentialness of many new classes of caterpillars. We then present a new inductive method for generating set-sequential trees.

To begin, we present the following lemma, which provides an application of the cases of Theorem \ref{thm:all_conj_progress} for which vectors can occur an odd number of times among $v_1, \dots, v_{2^{n-1}}$. Specifically, it provides a way of choosing such vectors $v_1, \dots, v_{2^{n-1}}$ from a set-sequential labeling of a graph $G$ such that $\sum_{i=1}^{2^{n-1}} v_i = 0$, where the choice of labels is based solely on the structure of $G$, and not based on the values of the labels. The lemma is very similar to results found in \cite{hegde_set_2009}, but we present the proof as it is relevant to future results in this paper.
\begin{lemma}
\label{lem:even_verts_sum_zero}
The sum of the labels of the even-degree vertices in any set-sequential labeling of a graph with at least two vertices is $0$.
\end{lemma}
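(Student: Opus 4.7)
The plan is to exploit two key algebraic facts about $\mathbb{F}_2^n$: first, that the sum of all nonzero vectors in $\mathbb{F}_2^n$ is $0$ whenever $n \geq 2$ (since each coordinate contains equally many ones and zeros), and second, that in $\mathbb{F}_2^n$ every vector is its own additive inverse, so any term that appears an even number of times in a sum contributes $0$.

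First I would observe that since the graph has at least two vertices, we have $|V(G)| + |E(G)| \geq 2$, so the labeling uses $\mathbb{F}_2^n$ for some $n \geq 2$. Because the labeling is set-sequential, the labels of the vertices and edges together consist of every nonzero vector in $\mathbb{F}_2^n$ exactly once, so
\begin{equation*}
\sum_{v \in V(G)} \ell(v) + \sum_{e \in E(G)} \ell(e) = \sum_{w \in \mathbb{F}_2^n \setminus\{0\}} w = 0.
\end{equation*}

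Next I would rewrite the edge-sum in terms of vertex labels. Since each edge $e = uv$ is labeled with $\ell(u) + \ell(v)$, the sum over edges collects each vertex label $\ell(v)$ a total of $\deg(v)$ times, giving
\begin{equation*}
\sum_{e \in E(G)} \ell(e) = \sum_{v \in V(G)} \deg(v)\,\ell(v) = \sum_{v:\, \deg(v)\,\text{odd}} \ell(v),
\end{equation*}
where the second equality uses that in $\mathbb{F}_2^n$ the coefficient $\deg(v)$ reduces mod $2$, killing the even-degree terms.

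Combining these two identities yields $\sum_{v \in V(G)} \ell(v) + \sum_{v:\, \deg(v)\,\text{odd}} \ell(v) = 0$. Splitting the first sum according to parity of degree, the odd-degree contribution is doubled and thus vanishes in $\mathbb{F}_2^n$, leaving exactly $\sum_{v:\, \deg(v)\,\text{even}} \ell(v) = 0$, which is the desired conclusion. There is no real obstacle here; the main thing to take care of is simply ensuring $n \geq 2$ so that the sum of all nonzero vectors is indeed $0$, which is guaranteed by the hypothesis that $G$ has at least two vertices.
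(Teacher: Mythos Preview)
Your proof is correct and follows essentially the same argument as the paper: both use that the labels exhaust $\mathbb{F}_2^n\setminus\{0\}$ (whose sum is $0$ for $n\geq 2$), rewrite the edge-label sum as $\sum_v \deg(v)\,\ell(v)$, and reduce coefficients mod $2$ so that only the even-degree vertex labels survive. The paper compresses this into a single chain of equalities, but the content is identical.
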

\begin{proof}
For any set-sequential graph $G$, let $X =  \{x_1, \dots, x_{|V(G)|}\}$ be the set of labels of the vertices and let $Y = \{y_1, \dots, y_{|E(G)|}\}$ be the set of labels of the edges for some set-sequential labeling of $G$, and let $n = \log_2 (|V(G)| + |E(G)| + 1)$. Then $X \cap Y = \emptyset$ and $X \cup Y = \mathbb{F}_2^n - \{0\}$. Let $d_1, \dots, d_{|V(G)|}$ be the degrees$\pmod{2}$ of the vertices of $G$. Then $$0 = \sum_{v \in \mathbb{F}_2^n - \{0\}} v = \sum_{v \in X \cup Y} v = \sum_{i=1}^{|V(G)|} x_i + \sum_{i=1}^{|E(G)|} y_i = \sum_{i=1}^{|V(G)|} x_i + \sum_{i=1}^{|V(G)|} d_i x_i = \sum_{i=1}^{|V(G)|} (d_i+1) x_i = \sum_{i | d_i = 0} x_i.$$
\end{proof}

The following result applies Theorem~\ref{thm:all_conj_progress} and Lemma~\ref{lem:even_verts_sum_zero}, in order to present a general method for inductively generating set-sequential trees.
\begin{theorem}
\label{cor:induction_using_conj_progress}
For any $2^{n-1}$-vertex set-sequential graph $G$, let $W = \{w_1, \dots, w_k\}$ denote some $k$-element subset of the vertices of $G$. For any sequence $c_1, \dots, c_k$ such that $\sum_{i=1}^k c_i = 2^{n-1}$, let $G'$ be the $2^n$-vertex graph obtained from $G$ by adding $c_i$ pendant edges to $w_i$ for all $1 \leq i \leq k$. Suppose that $G'$ has no even-degree vertices and one of the following conditions hold:
\begin{enumerate}
\item $\dim(\spn(W)) \leq 5$.
\item $\dim(\spn(W)) = 6$ and all $c_i$ are even.
\item $|W| = k \leq n$.
\item $\dim(\spn(W)) \leq n/2$ and all $c_i$ are even.
\end{enumerate}
Then the graph $G'$ is set-sequential. Furthermore, if $W'$ gives the vertices in $G'$ corresponding to $W$ from $G$, then $\dim(\spn(W')) = \dim(\spn(W))$.
\end{theorem}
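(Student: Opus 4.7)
The plan is to invoke the Balister--Győri--Schelp pendant construction described in the introduction and reduce the problem to an instance of Conjecture~\ref{conj:main_conj} lying in one of the resolved regimes of Theorem~\ref{thm:all_conj_progress}. A label-budget count forces $|V(G)|+|E(G)|=2^n-1$, so I would fix a set-sequential labeling of $G$ with labels in $\mathbb{F}_2^n\setminus\{0\}$ and write $v_{w_i}$ for the label of $w_i$. Prepend $0$ to every label of $G$, obtaining labels in $\mathbb{F}_2^{n+1}$, and reserve the $2^n$ vectors $\{1u:u\in\mathbb{F}_2^n\}$ for the new pendants. For the $j$th pendant, attached to some $w_i$, we must pick a leaf-label $1p_j$ and an edge-label $1q_j$ with $p_j+q_j=v_{w_i}$; for $G'$ to be set-sequential the pairs $(p_j,q_j)$ must partition $\mathbb{F}_2^n$. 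Defining the multiset $v_j:=v_{w_i}$ for the $j$th pendant (so each $v_{w_i}$ appears $c_i$ times), the task reduces to solving Conjecture~\ref{conj:main_conj} for $(v_1,\dots,v_{2^{n-1}})$ in $\mathbb{F}_2^n$.

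Next I would verify $\sum_j v_j=0$, which is required to invoke Theorem~\ref{thm:all_conj_progress}. Since $G'$ has no even-degree vertices and every vertex outside $W$ has the same degree in $G'$ as in $G$, the even-degree vertices of $G$ are precisely those $w_i$ for which $\deg_{G'}(w_i)-c_i$ is even, i.e., the $w_i$ with $c_i$ odd. Applying Lemma~\ref{lem:even_verts_sum_zero} to $G$ yields
\[
\sum_{j=1}^{2^{n-1}} v_j \;=\; \sum_{i=1}^{k} c_i\,v_{w_i} \;=\; \sum_{i:\,c_i\text{ odd}} v_{w_i} \;=\; 0.
\]

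With this identity established, I would match each of the four hypotheses to the corresponding case of Theorem~\ref{thm:all_conj_progress}. Because distinct vertices of $G$ receive distinct labels, one has $\spn\{v_1,\dots,v_{2^{n-1}}\}=\spn(W)$, the distinct values of the multiset are in bijection with $W$, and the multiplicity of $v_{w_i}$ in the multiset is exactly $c_i$. Thus the dimension conditions of cases 1, 2, and 4 transfer directly; the distinct-count condition of case 3 becomes $k\le n$; and the parity side-conditions ``all $c_i$ even'' translate into ``each $v_j$ occurs an even number of times.'' In every one of the four scenarios, Theorem~\ref{thm:all_conj_progress} supplies a partition of $\mathbb{F}_2^n$ into pairs $(p_j,q_j)$ with $p_j+q_j=v_j$, and assigning these pairs to the pendant edges completes a valid set-sequential labeling of $G'$. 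The ``furthermore'' claim $\dim\spn(W')=\dim\spn(W)$ is immediate, since $v\mapsto 0v$ is an injective $\mathbb{F}_2$-linear embedding $\mathbb{F}_2^n\hookrightarrow\mathbb{F}_2^{n+1}$.

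There is no deep obstruction to this argument; its substance is the translation of the graph-theoretic parity hypothesis ``$G'$ has no even-degree vertices'' into the algebraic identity $\sum_j v_j=0$ required by Theorem~\ref{thm:all_conj_progress}, which is exactly what Lemma~\ref{lem:even_verts_sum_zero} furnishes. Once that bookkeeping is complete, the four cases of the theorem pair off mechanically with the four cases of Theorem~\ref{thm:all_conj_progress}.
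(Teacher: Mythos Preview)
Your proposal is correct and follows essentially the same route as the paper: build the multiset $(v_j)$ of center-vertex labels with multiplicities $c_i$, use Lemma~\ref{lem:even_verts_sum_zero} together with the hypothesis that $G'$ has only odd-degree vertices to deduce $\sum_j v_j=0$, invoke Theorem~\ref{thm:all_conj_progress} case by case to obtain the pairs $(p_j,q_j)$, and complete the labeling via the $0$-prefix/$1$-prefix construction. Your justification of the ``furthermore'' clause via the linear embedding $v\mapsto 0v$ and your explicit observation that all even-degree vertices of $G$ must lie in $W$ are both a touch more spelled out than the paper's version, but the argument is the same.
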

\begin{proof}
Let $v_1, \dots, v_{2^{n-1}}$ consist of $c_i$ copies of $w_i$ for $1 \leq i \leq k$, so that the $i$th new pendant edge in $G'$ is attached to a vertex labeled $v_i$ for $1 \leq i \leq 2^{n-1}$. Let $d_i$ be the degree of $w_i$ in $G$. By Lemma \ref{lem:even_verts_sum_zero} and because $G'$ has no even-degree vertices, $$\sum_{i=1}^{2^{n-1}} v_i = \sum_{i=1}^k c_i w_i = \sum_{i=1}^k (d_i+1) w_i = \sum_{i|d_i = 0} w_i = \sum_{v \in V(G) \textrm{ of even degree}} v = 0.$$ Therefore, by the definition of the vectors $v_i$, Theorem \ref{thm:all_conj_progress} implies that there exists a partition of $\mathbb{F}_2^n$ into pairs $(p_i, q_i)$ such that $p_i + q_i = v_i$ for all $1 \leq i \leq 2^{n-1}$. Then append $0$ to the labels of all original vertices and edges in $G'$, or those that were also in $G$, and label each new pendant vertex and edge in $G'$ with $1p_i$ and $1q_i$ respectively, where $xy$ denotes $x$ concatenated with $y$. Then the labeling of $G'$ is set-sequential, and $\dim(\spn(W')) = \dim(\spn(W))$.
\end{proof}
Note that in Theorem~\ref{cor:induction_using_conj_progress}, the graph $G$ can have even-degree vertices, although $G'$ cannot. This property is useful in the next section, where large classes of caterpillars with only odd-degree vertices are shown to be set-sequential based on the fact that a few caterpillars having even-degree vertices are set-sequential.

\subsection{Caterpillars with only odd-degree vertices}
\label{subsec:app_conj}
In this section, we show that any caterpillar $T$ of diameter $k$ with only odd-degree vertices such that $k \leq 18$ or $2^{k-1} \leq |V(T)|$ is set-sequential.

Here, we only discuss trees with $2^{n-1}$ vertices for some positive integer $n$, unless explicitly stated otherwise. Abhishek and Agustine \cite{abhishek_set-valued_2012} showed that a caterpillar with diameter 4 is set-sequential if and only if each vertex has odd degree. Abhishek \cite{abhishek_set-valued_2013} extended this result by showing the same characterization holds for set-sequential caterpillars of diameter~5.

The motivation behind Conjecture \ref{conj:main_conj} presented in \cite{balister_coloring_2011} is easily applied to show that the conjecture, if true, would imply the existence of a finite set of caterpillars of some given diameter $k$ for which if these caterpillars are set-sequential, then all caterpillars of diameter $k$ with only odd-degree vertices are set-sequential. This statement is not extended to caterpillars with even-degree vertices because infinite classes of such caterpillars that are not set-sequential are shown to exist in \cite{hegde_set_2009}. The cases of the conjecture that we prove above are sufficient to show that such a finite set exists, although this finite set would be significantly smaller if the entire conjecture were known to be true. However, for sufficiently small $n$, we verify below that all graphs in this finite set are set-sequential.

Following the notation of Abhishek and Agustine \cite{abhishek_set-valued_2012}, let $T[d_1, \dots, d_k]$ denote the caterpillar with a $k$-vertex center path such that the $i$th center path vertex has degree $d_i$. This notation gives some freedom in expressing caterpillars; we typically use the unique form such that $d_i \geq 2$ for all $1 \leq i \leq k$, so that the caterpillar has diameter $k+1$. As a necessary exception, we use $T[1]$ to denote the graph consisting of a single edge. In the result below, we apply the first two cases in Theorem~\ref{cor:induction_using_conj_progress} in order to show that all caterpillars with diameter at most 18 are set-sequential.
\begin{theorem}
\label{thm:small_k}
All caterpillars with only odd-degree vertices and diameter $k \leq 18$ are set-sequential.
\end{theorem}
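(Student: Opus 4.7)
The plan is to proceed by strong induction on $n$, where $|V(T)| = 2^n$, using Theorem~\ref{cor:induction_using_conj_progress} as the main inductive engine. Given a caterpillar $T$ of diameter $k \leq 18$ with $2^n$ vertices and only odd-degree vertices, I would construct a smaller caterpillar $T'$ of diameter $k$ with $2^{n-1}$ vertices and only odd-degree vertices by removing $2^{n-1}$ leaves from spine vertices in pairs. Removing leaves in pairs at each spine vertex preserves the odd parity of its degree and keeps the diameter equal to $k$ (provided each $d_i' \geq 3$ is maintained). By the inductive hypothesis, $T'$ is set-sequential, and $T$ is obtained from $T'$ by adding pendants to a set $W$ of spine vertices, with each count $c_i$ even and $|W| \leq k - 1 \leq 17$.

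The inductive step then reduces to verifying that one of the conditions of Theorem~\ref{cor:induction_using_conj_progress} holds, which I would handle in three regimes. When $n \geq 17$, condition~3 applies immediately since $|W| \leq 17 \leq n$. When $n \leq 6$, any subset of $\mathbb{F}_2^n$ spans a subspace of dimension at most $6$, so, combined with the evenness of the $c_i$, condition~1 or~2 applies. For intermediate $n$, I would exploit flexibility in two directions: the choice of $T'$ itself (attempting to concentrate the $2^{n-1}$ new pendants on at most $n$ spine vertices, to enable condition~3), and the choice of set-sequential labeling of $T'$ (attempting to force the $W$-labels to lie in a subspace of dimension at most $\lfloor n/2 \rfloor$ or $6$, to enable condition~4 or~2 respectively).

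The base cases are the minimum-size caterpillars of each diameter $k \leq 18$, which occur at the smallest $n$ with $2^n \geq 2k$. At this minimum $n$, no diameter-$k$ caterpillar with $2^{n-1}$ vertices and all odd degrees exists to serve as $T'$, so these finitely many caterpillars must be verified directly---by exhibiting explicit set-sequential labelings, by appealing to the known results for small diameters \cite{abhishek_set-valued_2012, abhishek_set-valued_2013}, or by applying Theorem~\ref{cor:induction_using_conj_progress} from an even smaller set-sequential graph that need not be a caterpillar of diameter $k$ and may have even-degree vertices.

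The main obstacle is the intermediate range of $n$, where neither the small-dimension arguments nor the $|W| \leq n$ bound applies automatically. In particular, when $T$ has a relatively uniform spine-degree sequence, the per-vertex cap $c_i \leq d_i - 3$ can force $|W|$ to exceed $n$ even after optimizing the construction of $T'$. In this case, the proof must show that some set-sequential labeling of $T'$ places the spine labels in a sufficiently low-dimensional subspace of $\mathbb{F}_2^n$, and this delicate combinatorial balance between $|W|$, $\dim \spn W$, and the evenness of the $c_i$ is what ultimately restricts the argument to $k \leq 18$.
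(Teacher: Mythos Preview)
Your overall framework---induction via Theorem~\ref{cor:induction_using_conj_progress}, with base cases handled directly---matches the paper, but there is a genuine gap in the intermediate range $7 \leq n \leq 16$, and the idea that closes it is precisely what you are missing.

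Your inductive hypothesis is only that $T'$ is set-sequential. In the intermediate range you then hope either to concentrate the pendants on at most $n$ spine vertices (condition~3) or to ``force'' the spine labels of $T'$ into a subspace of dimension $\leq 6$ or $\leq n/2$. The first option fails in general: when the $d_i$ are uniformly small (e.g.\ $T[3,3,\dots,3]$), the caps $c_i \leq d_i-3$ leave no room to concentrate, exactly as you note. The second option is not a plan but a wish---you give no mechanism for producing such a labeling of $T'$ from the bare hypothesis that $T'$ is set-sequential.

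The paper's resolution is to \emph{strengthen the inductive hypothesis} to include the dimension bound: one proves not just that each caterpillar of diameter $k$ is set-sequential, but that it has a set-sequential labeling in which the center-path labels span a subspace of dimension at most $5$ (or at most $6$, with all $c_i$ even). This stronger statement is automatic for the base cases, since those are chosen to have $16$ or $32$ vertices and hence live in $\mathbb{F}_2^5$ or $\mathbb{F}_2^6$; and it is \emph{preserved} by each application of Theorem~\ref{cor:induction_using_conj_progress}, because that theorem guarantees $\dim(\spn(W')) = \dim(\spn(W))$. Thus the intermediate range never arises as a separate case at all: condition~1 or~2 applies for every $n$, and condition~3 is never needed. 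The bound $k \leq 18$ comes not from a ``delicate combinatorial balance'' at each step, but simply from the largest diameter reachable from a base caterpillar whose extended center path fits inside $\mathbb{F}_2^6$; this is also why the paper's list of base cases includes caterpillars with even-degree vertices such as $T[3,3,3,2,2,2,2,2,2,3]$ and the path $T[2,\dots,2]$, which you only gesture at.
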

\begin{proof}
We show the result by induction. For the base case, all of the caterpillars below are set-sequential. This is easily verified with a randomized greedy computer search.
\begin{enumerate}
\item $T[1]$
\item $T[5,3,3,3,3,3]$, $T[3,5,3,3,3,3]$, $T[3,3,5,3,3,3]$
\item $T[3,3,3,3,3,3,3]$
\item $T[3,3,3,2,2,2,2,2,2,3]$
\item $T[3,2,2,2,2,2,2,2,2,2,2,2,2]$
\item $T[2,2,2,2,2,2,2,2,2,2,2,2,2,2]$
\end{enumerate}

For the inductive step, we repeatedly apply Theorem~\ref{cor:induction_using_conj_progress} starting from the base cases above following the order below, where $G_1 \Rightarrow G_2$ means that the set-sequentialness of $G_1$ implies the set-sequentialness of $G_2$ under Theorem~\ref{cor:induction_using_conj_progress}. Note that because $\dim(\spn(W')) = \dim(\spn(W))$ in Theorem~\ref{cor:induction_using_conj_progress}, the dimension of the span of the labels of the center path vertices, which represent the set $W$ in Theorem~\ref{cor:induction_using_conj_progress}, remains at most 6 under the induction below, even though the number of vertices in the caterpillars grows arbitrarily large.
\begin{itemize}
\item $T[1] \Rightarrow$ All caterpillars with no even-degree vertices of diameter at most 2; includes~$T[2]$.
\item $T[2] \Rightarrow$ All caterpillars with no even-degree vertices of diameter 3 or 4; includes~$T[3, 3, 3]$.
\item $T[3,3,3] \Rightarrow$ All caterpillars with no even-degree vertices of diameter 5 or 6.
\item $T[5,3,3,3,3,3] \Rightarrow$ All caterpillars with no even-degree vertices of diameter 7 of the form $T[d_1,d_2,d_3,d_4,d_5,d_6]$ such that $d_1 > 3$ or $d_6 > 3$. Note that the second condition follows by symmetry.
\item $T[3,5,3,3,3,3] \Rightarrow$ All caterpillars with no even-degree vertices of diameter 7 of the form $T[d_1,d_2,d_3,d_4,d_5,d_6]$ such that $d_2 > 3$ or $d_5 > 3$. Note that the second condition follows by symmetry.
\item $T[3,3,5,3,3,3] \Rightarrow$ All caterpillars with no even-degree vertices of diameter 7 of the form $T[d_1,d_2,d_3,d_4,d_5,d_6]$ such that $d_3 > 3$ or $d_4 > 3$. Note that the second condition follows by symmetry.
\item $T[3,3,3,3,3,3,3] \Rightarrow$ All caterpillars with no even-degree vertices of diameter 8, 9, or~10.
\item $T[3,3,3,2,2,2,2,2,2,3] \Rightarrow$ All caterpillars with no even-degree vertices of diameter 11, 12, or 13.
\item $T[3,2,2,2,2,2,2,2,2,2,2,2,2] \Rightarrow$ All caterpillars with no even-degree vertices of diameter 14.
\item $T[2,2,2,2,2,2,2,2,2,2,2,2,2,2] \Rightarrow$ All caterpillars with no even-degree vertices of diameter 15, 16, or 17; includes $T[3,3,3,3,3,3,3,3,3,3,3,3,3,3,3]$.
\item $T[3,3,3,3,3,3,3,3,3,3,3,3,3,3,3] \Rightarrow$ All caterpillars with no even-degree vertices of diameter 18.
\end{itemize}

To illustrate the validity of the assertions above, we explain how the set-sequentialness of $T[3,3,3,2,2,2,2,2,2,3]$ implies the set-sequentialness of all caterpillars with no even-degree vertices of diameter 11, 12, or 13; an analagous argument applies to the other statements. Let $T_0 = T[3,3,3,2,2,2,2,2,2,3]$, and let $\mathcal{F}$ denote the family of caterpillars of diameter 11, 12, or 13 with no even-degree vertices. Because $|V(T_0)| = 16$, the set-sequential labeling of $T_0$ consists of vectors in $\mathbb{F}_2^5$, so by Theorem~\ref{cor:induction_using_conj_progress}, any 32-vertex tree $T'$ with no even-degree vertices consisting of $T_0$ with 16 added pendant edges is set-sequential. Therefore, all 32-vertex caterpillars in $\mathcal{F}$ are set-sequential. Furthermore, as all of these 32-vertex caterpillars can be constructed by adding pendant edges to only the 12 center path vertices of $T_0$, where $T_0$ is considered in the equivalent form $T[1,3,3,3,2,2,2,2,2,2,3,1]$, so that the center path has 12 vertices as opposed to 10, it follows that if $W'$ denotes the set of labels of center path vertices in some 32-vertex caterpillar in $\mathcal{F}$, then $\dim(\spn(W')) \leq 5$ by Theorem~\ref{cor:induction_using_conj_progress}.

From here, we proceed through induction. For some $n \geq 6$, assume that all $2^{n-1}$-vertex caterpillars in $\mathcal{F}$ are set-sequential, and the dimension of the span of the center path vertices is at most 5 in all of the set-sequential labelings. Then for any $2^n$-vertex caterpillar $T'$ in $\mathcal{F}$, there exists a $2^{n-1}$-vertex subgraph $T$ of $T'$ in $\mathcal{F}$ such that $T$ and $T'$ share the same center path; $T'$ consists of $T$ with $2^{n-1}$ pendant edges added to its center path. Therefore, by Theorem~\ref{cor:induction_using_conj_progress}, there exists a set-sequential labeling of $T'$ such that the dimension of the span of the labels of the center path vertices is at most 5, completing the inductive step.

\end{proof}

Because the result above is ultimately dependent on the fact that Conjecture \ref{conj:main_conj} has been verified for $n \leq 5$, in order to extend the upper bound of 18 on the caterpillar diameter, it would be necessary to either verify that the conjecture holds for larger $n$, or to verify that many more caterpillars serving as base cases are set-sequential. However, by applying Case \ref{case:n_distinct} of Theorem~\ref{cor:induction_using_conj_progress}, we show below that all caterpillars with sufficiently many vertices, and no even-degree vertices, of any given diameter are set-sequential.

\begin{theorem}
\label{thm:large_caterpillars}
For all $k \geq 0$, all caterpillars of diameter $k+1$ that have only odd-degree vertices and have at least $2^k$ vertices, with $2^n$ total vertices for some $n$, are set-sequential.
\end{theorem}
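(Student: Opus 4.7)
The plan is to proceed by strong induction on $n$, where $2^n = |V(T)|$. The base cases are handled by Theorem~\ref{thm:small_k}, which guarantees that every caterpillar of diameter at most $18$ with only odd-degree vertices is set-sequential; hence it suffices to treat the inductive step when $k \geq 18$. Given $T$ of diameter $k+1$ with $2^n$ vertices ($n \geq k \geq 18$) and only odd-degree vertices, the strategy is to construct a caterpillar $T_0$ with $|V(T_0)| = 2^{n-1}$ and only odd-degree vertices such that $T$ is obtained from $T_0$ by attaching $2^{n-1}$ pendant edges at a set $W \subseteq V(T_0)$ of size at most $k \leq n$. Case~3 of Theorem~\ref{cor:induction_using_conj_progress} will then derive the set-sequentialness of $T$ from that of $T_0$.

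When $n > k$, I would take $T_0$ with the same center path as $T$ but with $2^{n-1}$ leaves deleted in pairs from various center path vertices of $T$, so as to preserve odd parities while keeping at least two leaves on each endpoint and one leaf on each interior vertex of the center path (so that the diameter remains $k+1$). A short counting check shows such a deletion is feasible because the maximum total number of removable leaves, $2^n - 2k - 2$, exceeds $2^{n-1}$ whenever $n > k \geq 18$. Then $T_0$ has diameter $k+1$ and $|V(T_0)| = 2^{n-1} \geq 2^k$ vertices, so the inductive hypothesis gives $T_0$ set-sequential, and $W$ is contained in the $k$ center path vertices.

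The main obstacle is the subcase $n = k$: removing leaves while preserving the center path would force $|V(T_0)| = 2^{k-1} < 2^k$, falling outside the regime of the inductive hypothesis at diameter $k+1$. The remedy is to shorten the center path by one vertex. Writing $l_i$ for the number of leaves at the $i$th center path vertex $v_i$, odd degrees force $l_i \geq 1$ on each interior vertex, so $l_1 + l_k \leq 2^k - 2k + 2$, and in particular at most one of $l_1, l_k$ can exceed $2^{k-1}$. Hide whichever endpoint has at most $2^{k-1}$ leaves---without loss of generality $v_k$---and let $T_0$ have center path $v_1, \ldots, v_{k-1}$ with $v_k$ itself appearing as a leaf of $v_{k-1}$; for each $1 \leq i \leq k-1$, include $a_i$ of $v_i$'s leaves from $T$ in $T_0$. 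Parity requirements on $T_0$ force $a_1$ to be even and $a_i$ to be odd for $i \geq 2$, and $|V(T_0)| = 2^{k-1}$ forces $\sum_i a_i = 2^{k-1} - k$. The bound $l_k \leq 2^{k-1}$ is exactly what provides enough slack, subject to the individual bounds $a_i \leq l_i$, to realize this target sum. The resulting $T_0$ is a caterpillar of diameter $k$ on $2^{k-1}$ vertices with only odd-degree vertices, so the inductive hypothesis at $n' = k-1$ (with diameter $k$) yields $T_0$ set-sequential; applying Case~3 of Theorem~\ref{cor:induction_using_conj_progress} with $W = \{v_1, \ldots, v_k\}$, a set of size exactly $k = n$, then completes the induction.
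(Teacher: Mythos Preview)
Your argument is correct. The feasibility checks you outline (the counting in the $n>k$ case, and in the $n=k$ case the bound $l_k\le 2^{k-1}$ together with the parity constraints $a_1$ even, $a_i$ odd for $i\ge 2$, $\sum a_i=2^{k-1}-k$) all go through; note that to have $T_0$ of diameter exactly $k$ you implicitly need $a_1\ge 2$, which is available since $l_1\ge 2$, and in any event a smaller diameter would still fall under the inductive hypothesis.

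The paper's proof takes a closely related but more uniform route. Rather than inducting on $n$ and splitting into the two cases $n>k$ and $n=k$, the paper inducts directly on the diameter: at every step it strips \emph{all} pendants off one endpoint of the center path (the one with fewer pendants, so that $d_1-1<2^{n-1}$), together with enough additional pendants elsewhere to halve the vertex count while preserving odd degrees. This always drops the diameter to $k$ or $k-1$, and since $|V(T_0)|=2^{n-1}\ge 2^{k-1}$ the inductive hypothesis applies immediately without any case analysis. Your $n=k$ subcase is essentially this same construction; the paper simply observes that it works for every $n\ge k$, eliminating the need for the separate $n>k$ argument (and allowing the base case to start at $k\le 1$ rather than $k\le 18$). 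What your approach buys is that in the generic case $n>k$ the smaller tree $T_0$ retains the same center path as $T$, which could be convenient if one wanted finer control over the resulting labeling; the paper's approach buys brevity and a single uniform inductive step.
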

\begin{proof}
We show the result by induction. For the base case, note that the result holds for $k \leq 1$ by Theorem \ref{thm:small_k}. For the inductive step, let $k \geq 2$, and assume that the result holds for all caterpillars of diameter at most $k$. We now show that any caterpillar $T' = T[d_1, \dots, d_k]$ of diameter $k+1$ with $2^n$ vertices is set-sequential if $2^n \geq 2^k$. First note that $T'$ has $2^n - k \geq 2^{n-1} + (k-2)$ pendant edges, so it is possible to remove $2^{n-1}$ pendant edges from $T'$ while maintaining the property that each vertex has odd degree. By assuming without loss of generality that $d_1 \leq d_k$, then because $(d_1-1) + (d_k-1) < |E(T')| = 2^n-1$, it follows that $d_1-1 < 2^{n-1}$. Therefore, there exists a graph $T$ obtained by removing all $d_1-1$ pendant edges stemming from the first vertex in the path of $T'$, and some additional pendant edges, such that $T$ has $2^{n-1}$ vertices, all with odd degree. Because vertices in the path have odd degree before and after the removal, each of these vertices lost an even number of pendant edges. Furthermore, by construction $T$ has diameter either $k$ or $k-1$, and $|V(T)| = 2^{n-1} \geq 2^{k-1} \geq 2^{k-2}$, so therefore $T$ is set-sequential by the inductive hypothesis. It follows that $T'$ is set-sequential by Case \ref{case:n_distinct} of Theorem~\ref{cor:induction_using_conj_progress}.
\end{proof}

The argument used to show Theorem \ref{thm:large_caterpillars} can be extended to other general classes of trees. Intuitively, it can be used to show the set-sequentialness of trees with sufficiently many pendant edges, or vertices with degree 1.

%

\subsection{Attaching copies of set-sequential trees}
\label{subsec:copies_induction}
Although Theorem~\ref{cor:induction_using_conj_progress} provides a powerful way to inductively produce set-sequential trees, it is most useful for showing that trees with many vertices for a fixed diameter are set-sequential. Likewise, this method is not very useful for finding trees with relatively large diameters and few vertices, such as trees consisting of a long path with a few small offshoots. However, these types of trees containing long paths can serve as base cases in induction using Theorem~\ref{cor:induction_using_conj_progress}, and are therefore of interest. Mehta and Vijayakumar \cite{mehta_note_2008} show that all paths with at least 16 vertices are set-sequential. Below, we use a similar method of proof as in \cite{mehta_note_2008}, but we show a much more general result, which applies to all trees as opposed to only paths.

Let $Z$ be a set-sequential path with vertices $u$ and $v$ of degree 1, and let $k$ be defined so that $Z$ has $(k+1)$/2 vertices. Let $z_1, z_3, z_5, \dots, z_{k-2}, z_k \in \mathbb{F}_2^n$ be the labels of the vertices of $Z$ in order from $u$ to $v$, and let $z_2, z_4, z_6, \dots, z_{k-3}, z_{k-1} \in \mathbb{F}_2^n$ be the labels of the edges of $Z$ in the same order, so that $z_1 = u$, $z_k = v$, and $z_{2i-1} + z_{2i+1} = z_{2i}$ for all $1 \leq i \leq (k-1)/2$. In \cite{mehta_note_2008}, it was shown that there exist sequences $a_i$, $b_i$, $c_i$, and $d_i$ of vectors in $\mathbb{F}_2^2$ defined over all integer $i$ such that the sequence $w_i \in \mathbb{F}_2^{n+2}$ for $1 \leq i \leq 4k+3$ given by Table \ref{tab:w_i_def} satisfies $w_{2i-1} + w_{2i+1} = w_{2i}$ for all $1 \leq i \leq 2k+1$, and all $w_i$ are distinct. Using this construction, we show the following result.

\begin{table}
\centering
\begin{tabular}{c c c c c c c c c c c c c c c c c c c}
\hhline{===============}
\bf Value & $w_1$ & $\dots$ & $w_k$ & $w_{k+1}$ & $w_{k+2}$ & $\dots$ & $w_{2k-1}$ & $w_{2k}$ & $w_{2k+1}$ \\
\hline
\bf Prefix & $a_k$ & $\dots$ & $a_1$ & $10$ & $b_1$ & $\dots$ & $b_{k-2}$ & $b_k$ & $b_{k-1}$ \\
\bf Suffix & $z_k$ & $\dots$ & $z_1$ & $0_n$ & $z_1$ & $\dots$ & $z_{k-2}$ & $z_k$ & $z_{k-1}$ \\
\hhline{===============}
\bf Value & $w_{2k+2}$ & $w_{2k+3}$ & $w_{2k+4}$ & $w_{2k+5}$ & $\dots$ & $w_{3k}$ & $w_{3k+1}$ & $w_{3k+2}$ & $w_{3k+3}$ \\
\hline
\bf Prefix & $11$ & $c_{k-1}$ & $c_k$ & $c_{k-2}$ & $\dots$ & $c_3$ & $c_1$ & $c_2$ & $01$ \\
\bf Suffix & $0_n$ & $z_{k-1}$ & $z_k$ & $z_{k-2}$ & $\dots$ & $z_3$ & $z_1$ & $z_2$ & $0_n$ \\
\hhline{===============}
\bf Value & $w_{3k+4}$ & $w_{3k+5}$ & $w_{3k+6}$ & $\dots$ & $w_{4k+3}$ \\
\hline
\bf Prefix & $d_2$ & $d_1$ & $d_3$ & $\dots$ & $d_k$ \\
\bf Suffix & $z_2$ & $z_1$ & $z_3$ & $\dots$ & $z_k$ \\
\hhline{===============}
\end{tabular}
\caption{The definition of the sequence $w_1, \dots, w_{4k+3}$ given in \cite{mehta_note_2008}, where each value $w_i$ consists of the prefix concatenated with the suffix. Above, $0_n$ denotes the zero vector in $\mathbb{F}_2^n$, and the notation $x_{i_1} \dots x_{i_2}$ denotes the sequence containing $x_i$ for all $i$ between $i_1$ and $i_2$, in increasing order if $i_1 \leq i_2$ and in decreasing order if $i_1 > i_2$.}
\label{tab:w_i_def}
\end{table}

\begin{theorem}
\label{thm:4_copies}
For any set-sequential tree $T$ with at least 3 vertices, let $u$ and $v$ be any distinct vertices in $T$ with degree 1. Let $u_1, \dots, u_4$ and $v_1, \dots, v_4$ be the vertices corresponding to $u$ and $v$ respectively in 4 distinct copies $T_1, \dots, T_4$ of $T$. Then the tree $T'$ obtained by adding $(u_1, u_2)$, $(v_2, v_3)$, and $(u_3, u_4)$ as edges to the union of $T_1$, $T_2$, $T_3$, and $T_4$ is set-sequential.
\end{theorem}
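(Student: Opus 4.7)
The plan is to adapt the Mehta--Vijayakumar construction, treating the $u$--$v$ path inside $T$ and the off-path branches separately. Let $\phi$ be the given set-sequential labeling of $T$, and let $P = p_0 p_1 \cdots p_m$ (with $p_0 = u$, $p_m = v$) be the unique $u$--$v$ path in $T$. Reading off the alternating vertex-edge labels of $P$ gives a sequence $z_1, z_2, \ldots, z_k$ with $k = 2m+1 \geq 5$; these are pairwise distinct because $\phi$ is a bijection, and they satisfy $z_{i-1} + z_{i+1} = z_i$ at even $i$ because $\phi$ is an edge-sum labeling on $T$. Although $P$ is not itself a set-sequential path, the construction encoded in Table~\ref{tab:w_i_def} depends only on these two properties of the $z_i$'s, and applying it produces prefix sequences $a_i, b_i, c_i, d_i \in \mathbb{F}_2^2$ together with labels $w_1, \ldots, w_{4k+3}$ for the ``extended path'' $P_1 \cup (u_1 u_2) \cup P_2 \cup (v_2 v_3) \cup P_3 \cup (u_3 u_4) \cup P_4$ in $T'$, with $\{a_\ell, b_\ell, c_\ell, d_\ell\} = \mathbb{F}_2^2$ for each $\ell$ and with connector labels $10\|0_n$, $11\|0_n$, $01\|0_n$.

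For each copy $i \in \{1,2,3,4\}$, the MV output determines a prefix function $\pi_i$ on the path vertices. The MV boundary swaps affect only the labels of $u$, $v$, and their incident path-edges, so at every internal path vertex $p_j$ (for $1 \leq j \leq m-1$) the $4$-tuple $(\pi_1(p_j), \pi_2(p_j), \pi_3(p_j), \pi_4(p_j)) = (a_{2j+1}, b_{2j+1}, c_{2j+1}, d_{2j+1})$ is a permutation of $\mathbb{F}_2^2$. Since $u$ and $v$ have degree $1$ in $T$, no branch of $T$ is rooted at one of the swap-affected vertices. I will extend each $\pi_i$ to all of $V(T)$ by processing the branches in BFS order starting from $P$: given a vertex $x$ for which $(\pi_i(x))_{i=1}^4$ is a permutation of $\mathbb{F}_2^2$, for each not-yet-labeled neighbor $y$ I will choose $(\pi_i(y))_{i=1}^4$ to be a permutation of $\mathbb{F}_2^2$ such that $(\pi_i(x) + \pi_i(y))_{i=1}^4$ is also a permutation. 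The existence of such a choice is the existence of a complete mapping of the Klein $4$-group: a direct enumeration shows that for any fixed parent permutation there are exactly $8$ valid child permutations, so the extension never gets stuck.

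I then label $T'$ as follows: each non-path element $x$ in copy $T_i$ receives $\pi_i(x) \| \phi(x)$, while the path-and-connector elements keep the labels $w_j$ provided by the MV construction. The verification that this labeling is set-sequential is a disjointness and counting check. The path-and-connector labels exhaust $\{\alpha \| z_\ell : \alpha \in \mathbb{F}_2^2,\ 1 \leq \ell \leq k\} \cup \{\alpha \| 0_n : \alpha \in \mathbb{F}_2^2 \setminus \{0\}\}$, while the branch labels exhaust $\{\alpha \| y : \alpha \in \mathbb{F}_2^2,\ y \in \mathbb{F}_2^n \setminus (\{0\} \cup \{z_1, \ldots, z_k\})\}$ because for each branch element $x$ the prefix tuple $(\pi_i(x))_{i=1}^4$ is a permutation of $\mathbb{F}_2^2$. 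These two families together with $\{0\}$ partition $\mathbb{F}_2^{n+2}$. The edge-sum condition inside each copy follows because $\pi_i$ is vertex-consistent and $\phi$ already satisfies the edge-sum condition on $T$, and across copies it is realized by the connector edges via identities such as $\pi_1(u)\|\phi(u) + \pi_2(u)\|\phi(u) = (\pi_1(u) + \pi_2(u))\|0_n$.

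The main technical obstacle I anticipate is bookkeeping: confirming that the MV boundary swaps remain localized to $\{u,v\}$ and the two path-edges incident to them, so that the ``permutation prefix'' property really does hold at every branch-attachment vertex. Once that is in hand, the complete-mapping enumeration and the label partition count are both elementary, and the hypothesis $|V(T)| \geq 3$ enters only to guarantee $k \geq 5$ so that the MV table is in its generic, non-degenerate regime.
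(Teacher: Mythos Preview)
Your proposal is correct and follows essentially the same approach as the paper's proof: use the Mehta--Vijayakumar sequence from Table~\ref{tab:w_i_def} to label the long path $W$ through the four copies, observe that at every internal path vertex the four prefixes exhaust $\mathbb{F}_2^2$, and then propagate prefixes outward to the branches so that both the child prefixes and the parent-plus-child prefixes remain permutations of $\mathbb{F}_2^2$. The only cosmetic difference is that the paper commits to a single fixed prefix-update map $f$ (with $f(00)=00$, $f(01)=10$, $f(10)=11$, $f(11)=01$, chosen so that both $f$ and $p\mapsto p+f(p)$ are bijections) applied uniformly at every branch step, whereas you phrase the same step more abstractly as ``choose any of the $8$ complete mappings of the Klein $4$-group''; the paper's $f$ is simply one of your eight choices, and neither formulation gains anything over the other for this theorem.
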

\begin{proof}
Let $Z$ be the unique path in $T$ connecting $u$ and $v$, so that when the vectors $z_i \in \mathbb{F}_2^n$ are defined as above, $z_1 = u$ and $z_k = v$. We observe that most of the properties of the sequence $w_i$ constructed from the path $Z$ are not dependent on the length of $Z$. Specifically, for any odd $k \geq 5$, so that $|V(Z)| \geq 3$, the sequence $w_i$ maintains the properties that $w_{2i-1} + w_{2i+1} = w_{2i}$ for all $1 \leq i \leq 2k+1$, and all $w_i$ are distinct. Therefore, the sequence $w_i$ serves as a partial set-sequential labeling for the tree $T'$, where the vertices of the path $W$ connecting $v_1$ and $v_4$ in $T'$ are labeled with $w_1, w_3, w_5, \dots, w_{4k+1}, w_{4k+3}$ in order from $v_1$ to $v_4$, and the edges of $W$ are labeled $w_2, w_4, w_6, \dots, w_{4k}, w_{4k+2}$.

It remains to complete the set-sequential labeling of $T'$ by providing labels for all vertices not in $W$. Note that in the sequence $w_i$ for $1 \leq i \leq 4k+3$, each vector $z_i$ for $1 \leq i \leq k$ occurs exactly 4 times as an $n$-dimensional suffix, once with each of the 4 possible 2-dimensional prefixes. The remaining 3 vectors among the $w_i$ consist of the $n$-dimensional $0$ vector as a suffix, concatenated with each of the 3 possible nonzero 2-dimensional prefixes. Therefore, for each label $x$ of a vertex or edge in $T$ but not in $Z$, if the labels of the 4 copies of that vertex or edge in $T'$ are $00x$, $01x$, $10x$, and $11x$ in some order, then all $2^{n+2}-1$ labels in the labeling of $T'$ are guaranteed to be distinct. Using this idea, we present the following method of labeling the remaining vertices and edges in $T'$.

Let $q$ and $r$ be any two connected vertices in $T$ with distances $d-1$ and $d$ from the path $Z$ respectively. Let $q_1, \dots, q_4$ and $r_1, \dots, r_4$ be the copies of $q$ and $r$ in $T_1, \dots, T_4$ respectively. We inductively provide labels for $r_1, \dots, r_4$ given that $q_1, \dots, q_4$ are labeled with $00q$, $01q$, $10q$, $11q$ in some order, using the existing labels of $W$, which correspond to a distance 0 from the path, as base cases. The base cases rely on the property of the sequence $w_i$ that for any vertex label $z_i$ other than $u$ or $v$, the 4 corresponding copies of the vertex in $T'$ are labeled with all 4 possible 2-dimensional prefixes concatenated to $z_i$. For the inductive step, if $p_i$ is the 2-dimensional prefix of the label of $q_i$, then label $r_i$ with $f(p_i)$ concatenated to $r$, where $f$ is defined by 
\begin{align}
\nonumber
\begin{split}
f(00) &= 00 \\
f(01) &= 10 \\
f(10) &= 11 \\
f(11) &= 01. \\
\end{split}
\end{align}
Therefore, the edge $e_i$ connecting $q_i$ and $r_i$ (corresponding to the edge $e$ in $T$, connecting $q$ and $r$), is labeled with $$p_iq + f(p_i)r = (p_i + f(p_i))(q + r) = (p_i + f(p_i))e.$$ Because both the functions $f(p)$ and $p + f(p)$ are one-to-one in $\mathbb{F}_2^2$, it follows that the labels of the 4 copies of both $r$ and $e$ in $T'$ consist of all 4 2-dimensional prefixes concatenated with $r$ and $e$ respectively. Therefore all labels in $T'$ are distinct, so the labeling is set-sequential.
\end{proof}

For a set-sequential tree $T$ with $x$ odd-degree vertices and $y$ even-degree vertices, Theorem~\ref{thm:4_copies} shows the set-sequentialness of at least one tree $T'$ with $4x-6$ odd-degree vertices and $4y+6$ even-degree vertices. However, if $y=0$, Case \ref{case:dim_leq_5} of Theorem~\ref{cor:induction_using_conj_progress} is easily used to create a tree $T''$ with twice the number of vertices as $T'$ containing no even-degree vertices. Furthermore, Case \ref{case:n_distinct} of the corollary can be used to create such a tree $T''$ even if $T'$ has more than 6 even-degree vertices, as long as $T'$ has sufficiently many vertices. Using this idea, it is easy to see that Theorem \ref{thm:4_copies} can be used to construct sequences of trees, which may be caterpillars, such that the diameter grows nearly linearly with the number of vertices. Specifically, such sequences would be constructed by repeatedly applying Theorem \ref{thm:4_copies} $t$ times and Theorem~\ref{cor:induction_using_conj_progress} one time for some integer $t$, beginning from some base tree.

Another motivation for Theorem \ref{thm:4_copies} was to find a caterpillar $T_k$ of diameter $k$ with at most $2k$ vertices such that $T$ is set-sequential for as many positive integers $k$ as possible. Each such set-sequential caterpillar $T_k$ could serve as a base case for showing the set-sequentialness of all caterpillars of diameter $k$ with no even-degree vertices through induction if Conjecture~\ref{conj:main_conj} were known to be true, or if a sufficiently strong subset of the conjecture were known to hold. Theorem \ref{thm:4_copies} succeeds in showing the existence of infinitely many such caterpillars $T_k$ other than paths, which were shown to be set-sequential in \cite{mehta_note_2008}. For example, the theorem can be repeatedly applied beginning with the star with 3 leaves in order to show that there exists a set-sequential caterpillar $T_k$ of diameter $k$ with at most $2k$ vertices for all integers $k$ that can be written in the form $k = 3 \cdot 4^c - 1$ for some nonnegative integer $c$. 

\section{Concluding Remarks}
\label{sec:conclusion}
In this paper, we investigate the problem of classifying set-sequential trees. We resolve many restricted cases of Conjecture \ref{conj:main_conj}, which Balister, Gy\H{o}ri, and Schelp \cite{balister_coloring_2011} introduced as a potential general method for showing trees to be set-sequential.
We then apply our progress on Conjecture \ref{conj:main_conj} to show the set-sequentialness of many new classes of caterpillars. We also introduce a new method for constructing set-sequential trees that can be applied to many classes of trees for which the method of \cite{balister_coloring_2011} is not useful.

Many of our results are motivated by Conjecture \ref{conj:odd_deg_trees}, which states that all trees with only odd-degree vertices, and with $2^n$ vertices for some nonnegative integer $n$, are set-sequential. In Section \ref{subsec:app_conj}, we resolve certain cases of this conjecture by applying the idea introduced in \cite{balister_coloring_2011} of generating set-sequential trees by adding $2^{n-1}$ pendant edges to a $2^{n-1}$-vertex tree. However, this method cannot prove a tree to be set-sequential for which fewer than half of the vertices have degree 1. With respect to this limitation, Theorem \ref{thm:4_copies} is particularly interesting, as it shows the set-sequentialness of classes of trees with arbitrarily few vertices of degree 1. We suggest that additional similar results could help approach the problem of showing the set-sequentialness of all trees with only odd-degree vertices.

\section{Acknowledgements}
We would like to thank Dr. Tanya Khovanova for her helpful comments. We would also like to thank the MIT PRIMES program for the opportunity to perform this research. This material is based upon work supported by the National Science Foundation under Grant no.~DMS-1519580.

\bibliographystyle{ieeetr}
\singlespacing
\bibliography{library.bib,library_manual.bib}
\end{document}